\def\ds{\displaystyle}
\def\R{{\mathbb R}}
\def\N{{\mathbb N}}
\def\Z{{\mathbb Z}}
\def\tcut{t_{\operatorname{cut}}}
\def\tmax{t_1^{\operatorname{MAX}}}
\def\tt{\mathbf t}
\def\then{\quad\Rightarrow\quad}
\def\iff{\quad\Leftrightarrow\quad}
\newcommand{\E}{\operatorname{E}\nolimits}
\newcommand{\e}{\operatorname{e}\nolimits}
\newcommand{\SE}{\operatorname{SE}\nolimits}
\newcommand{\const}{\operatorname{const}\nolimits}
\newcommand{\Exp}{\operatorname{Exp}\nolimits}
\newcommand{\sgn}{\operatorname{sgn}\nolimits}
\newcommand{\am}{\operatorname{am}\nolimits}
\newcommand{\sn}{\operatorname{sn}\nolimits}
\newcommand{\cn}{\operatorname{cn}\nolimits}
\newcommand{\dn}{\operatorname{dn}\nolimits}
\newcommand{\Max}{\operatorname{Max}\nolimits}
\newcommand{\MAX}{\operatorname{MAX}\nolimits}
\newcommand{\CMAX}{\operatorname{CMAX}\nolimits}
\newcommand{\cl}{\operatorname{cl}\nolimits}
\newcommand{\SO}{\operatorname{SO}\nolimits}
\newcommand{\SU}{\operatorname{SU}\nolimits}
\newcommand{\SL}{\operatorname{SL}\nolimits}
\newcommand{\Id}{\operatorname{Id}\nolimits}
\def\a{\alpha}
\def\g{\gamma}
\def\lam{\lambda}
\def\f{\varphi}
\def\p{\psi}
\def\eps{\varepsilon}
\def\Del{\Delta}
\def\bnu{\bar{\nu}}
\def\tq{\widetilde{q}}
\def\lan{\langle}
\def\ran{\rangle}
\def\vh{\vec h}
\def\vH{\vec H}
\def\Cinft{C^{\infty}(T^* M)}
\def\ts{\,{\sn \tau}\,}
\def\tc{\,{\cn \tau}\,}
\def\td{\,{\dn \tau}\,}
\def\ss{\,{\sn  p}\,}
\def\cc{\,{\cn  p}\,}
\def\dd{\,{\dn  p}\,}
\def\tdp{\,{\dn^2 \tau}\,}
\def\ssp{\,{\sn^2  p}\,}
\def\ccp{\,{\cn^2  p}\,}
\def\ddp{\,{\dn^2  p}\,}
\def\Eo{\,{\E(p)}\,}
\newcommand{\pder}[2]{\frac{\partial \, #1}{\partial \, #2} }
\newcommand{\be}[1]{\begin{equation}\label{#1}}
\newcommand{\ee}{\end{equation}}
\newcommand{\eq}[1]{$(\protect\ref{#1})$}
\newcommand{\map}[3]{#1 \, : \, #2 \to #3}
\newcommand{\mapto}[3]{#1 \, : \, #2 \mapsto #3}
\renewcommand{\Vec}{\operatorname{Vec}\nolimits}
\newcommand{\ddef}[1]{#1}
\newcommand{\spann}{\operatorname{span}\nolimits}
\newtheorem{theorem}{Theorem}[section]
\newtheorem{lemma}{Lemma}[section]
\newtheorem{corollary}{Corollary}[section]
\newtheorem{proposition}{Proposition}[section]
\theoremstyle{remark}
\newcommand{\onefiglabelsize}[4]
{
\begin{figure}[htbp]
\begin{center}
\includegraphics[width=0.9\textwidth]{#1}
\\
\parbox[t]{0.8#4\textwidth}{\caption{#2}\label{#3}}
\end{center}
\end{figure}
}
\newcommand{\onefiglabel}[3]
{
\begin{figure}[htbp]
\begin{center}
\includegraphics[width=0.5\textwidth]{#1}
\\
\parbox[t]{0.5\textwidth}{\caption{#2}\label{#3}}
\end{center}
\end{figure}
}
\newcommand{\twofiglabel}[6]
{
\begin{figure}[htbp]
\includegraphics[width=0.47\textwidth]{#1}
\hfill
\includegraphics[width=0.47\textwidth]{#4}
\\
\parbox[t]{0.45\textwidth}{\caption{#2}\label{#3}}
\hfill
\parbox[t]{0.45\textwidth}{\caption{#5}\label{#6}}
\end{figure}
}
\title{Maxwell strata in sub-Riemannian problem on the group of motions of a plane
}
\author{
I. Moiseev\footnote{Via G. Giusti 1,  Trieste 34100, Italy, E-mail: moiseev.igor@gmail.com} 
\and 
Yu. L. Sachkov\footnote{Program Systems Institute,
Pereslavl-Zalessky,  Russia,
E-mail: sachkov@sys.botik.ru} 
}
\date{\today}
\begin{document}

\maketitle

\begin{abstract}
The left-invariant sub-Riemannian problem on the group of motions of a plane is considered. Sub-Riemannian geodesics are parametrized by Jacobi's functions. Discrete symmetries of the problem generated by reflections of pendulum are described. The corresponding Maxwell points are characterized, on this basis an upper bound on the cut time is obtained.

Keywords: optimal control, sub-Riemannian geometry, differential-geometric methods, left-invariant problem,  Lie group, Pontryagin Maximum Principle, symmetries, exponential mapping, Maxwell stratum

MSC: 49J15, 93B29, 93C10, 53C17, 22E30
\end{abstract}

\newpage
\tableofcontents

\newpage
\section{Introduction}
Problems of sub-Riemannian geometry have been actively studied by geometric control methods. 
One of the central and hard questions in this domain is a description of cut and conjugate loci. Detailed results on the local structure of conjugate and cut loci were obtained in the 3-dimensional contact case~\cite{agrachev_contact, gauthier_contact}. Global results are restricted to symmetric low-dimensional cases, primarily for left-invariant problems on Lie groups (the Heisenberg group~\cite{brock, versh_gersh},
 the growth vector $(n,n(n+1)/2)$~\cite{myasnich36, myasnichn(n+1)/2, monroyn(n+1)/2}, the groups $\SO(3)$, $\SU(2)$, $\SL(2)$ and the Lens Spaces~\cite{boscain_SO3}).
 
 The paper continues this direction of research: we start to study the left-invariant sub-Riemannian problem on the group of motions of a plane $\SE(2)$. This problem has important applications in robotics~\cite{laumond} and vision~\cite{petitot}. On the other hand, this is the simplest sub-Riemannian problem where the conjugate and cut loci differ one from another in the neighborhood of the initial point.

The main result of the work is an upper bound on the cut time $\tcut$ given in Theorem~\ref{th:tcut_bound_fin}: we show that for any sub-Riemannian geodesic on $\SE(2)$ there holds the estimate $\tcut \leq \tt$, where $\tt$ is a certain function defined on the cotangent space at the identity. In a forthcoming paper~\cite{cut_sre} we prove that in fact   $\tcut = \tt$.
The bound on the cut time is obtained via the study of discrete symmetries of the problem and the corresponding Maxwell points --- points where two distinct sub-Riemannian geodesics of the same length intersect one another.

This work has the following structure.
In Section~\ref{sec:PS} we state the problem and discuss existence of solutions.
In Section~\ref{sec:PMP} we apply Pontryagin Maximum Principle to the problem. The Hamiltonian system for normal extremals is triangular, and the vertical subsystem is the equation of mathematical pendulum.
In Section~\ref{sec:exp} we endow the cotangent space at the identity with special elliptic coordinates induced by the flow of the pendulum, and integrate the normal Hamiltonian system in these coordinates. Sub-Riemannian geodesics are parametrized by Jacobi's functions.
In Section~\ref{sec:max} we construct a discrete group of symmetries of the exponential mapping by continuation of reflections in the phase cylinder of the pendulum.
In the main Section~\ref{sec:estim}
we obtain an explicit description of Maxwell strata corresponding to the group of discrete symmetries, and prove the upper bound on cut time.
This approach was already successfully applied to the analysis of several invariant optimal control problems on Lie groups~\cite{dido_exp, max1, max2, max3, el_max, el_conj}.

\section{Problem statement}
\label{sec:PS}
The group of orientation-preserving motions of a two-dimensional plane is represented as follows:
$$
\SE(2) = 
\left\{\left(
\begin{array}{ccc}
\cos \theta & - \sin \theta & x \\
\sin \theta & \cos \theta & y \\
0 & 0 & 1
\end{array} 
\right)
\mid
\theta \in S^1 = \R / (2 \pi \Z), \ x, y \in \R
\right\}.
$$
The Lie algebra of this Lie group is
$$
\e(2) = \spann(E_{21}-E_{12}, E_{13}, E_{23}),
$$
where $E_{ij}$ is the $3 \times 3$ matrix with the only identity entry in the $i$-th row and $j$-th column, and all other zero entries.

Consider a rank 2 nonintegrable left-invariant  sub-Riemannian structure on $\SE(2)$,
i.e., a rank 2 nonintegrable left-invariant distribution $\Del$ on $\SE(2)$ with a left-invariant inner product $\langle \cdot, \cdot \rangle$ on $\Del$.
One can easily show that such a structure is unique, up to a scalar factor in the inner product. We choose the following model for such a sub-Riemannian structure:
\begin{align*}
&\Del_q = \spann(\xi_1(q), \xi_2(q)), \qquad \langle \xi_i, \xi_j\rangle = \delta_{ij}, \quad i, \ j = 1, 2,\\
& \xi_1(q) = q E_{13}, \quad  \xi_2(q) = q(E_{21}-E_{12}),
\end{align*}
and study the corresponding optimal control problem:
\begin{align}
&\dot q = u_1 \xi_1(q) + u_2 \xi_2(q), \qquad q \in M = \SE(2), \quad u = (u_1, u_2) \in \R^2, \label{sys2}\\
&q(0) = q_0 = \Id , \qquad q(t_1) = q_1,  \nonumber \\
&\ell=\int_0^{t_1}\!\!\!{\sqrt{u_1^2+u_2^2}}\;dt\rightarrow\min. \nonumber
\end{align}
In the coordinates $(x,y,\theta)$,  the basis vector fields read as
\be{xi12}
\xi_1 = \cos \theta \pder{}{x} + \sin \theta \pder{}{y}, \qquad \xi_2 = \pder{}{\theta},
\ee
and the
problem takes the following form:
\begin{align}
&\dot x = u_1 \cos{\theta}, \quad  \dot y = u_1 \sin{\theta}, \quad  \dot \theta = u_2, \label{sys1} \\
&q = (x,y,\theta) \in M \cong \R^2_{x,y} \times S^1_{\theta}, 
\quad u = (u_1, u_2) \in \R^2,
\label{qu} \\
&q(0) = q_0 = (0,0,0), \qquad q(t_1) = q_1 = (x_1,y_1,\theta_1),  \label{bound} \\
&\ell=\int_0^{t_1}\!\!\!{\sqrt{u_1^2+u_2^2}}\;dt\rightarrow\min. \label{l}
\end{align}

Admissible controls $u(\cdot)$ are measurable bounded, and admissible trajectories $q(\cdot)$ are  Lipschitzian.

The problem can be reformulated in robotics terms as follows. 
Consider a mobile robot in the plane that can move forward and backward, and rotate around itself (Reeds-Shepp car). The state of the robot is described by coordinates $(x,y)$ of its center of mass and by angle of orientation $\theta$. Given an initial and a terminal state of the car, one should find the shortest path from the initial state to the terminal one, when the length of the path is measured in the space $(x,y,\theta)$, see Fig.~\ref{fig:prob_state}.

\onefiglabel
{sre2_statement_t}{Problem statement}{fig:prob_state}

Cauchy-Schwarz inequality implies that the minimization problem for the sub-Riemannian length functional~\eq{l} is equivalent to the minimization problem for the energy functional
\be{J}
J = \frac 12 \int_0^{t_1} (u_1^2 + u_2^2) \, d t \to \min
\ee
with fixed $t_1$.


System~\eq{sys2} has full rank:
\begin{align}
&\xi_3 = [\xi_1, \xi_2] = \sin \theta \pder{}{x} - \cos \theta \pder{}{y}, \label{xi3} \\
&\spann(\xi_1(q), \xi_2(q), \xi_3(q)) = T_q M \qquad \forall \ q \in M, \label{spanxi123}
\end{align}
so it is completely controllable on $M$. 

Another standard reasoning proves existence of solutions to optimal control problem~\eq{sys2},  \eq{bound}, \eq{J}. First the problem is equivalently reduced to the time-optimal problem with dynamics~\eq{sys2}, boundary conditions~\eq{bound}, restrictions on control $u_1^2 + u_2^2 \leq 1$, and the cost functional $t_1 \to \min$. Then the state space of the problem is embedded into $\R^3$ (see e.g.~\cite{el_max}), and finally Filippov's theorem~\cite{notes} implies existence 
 of optimal controls. 

\section{Pontryagin Maximum Principle}
\label{sec:PMP}

We apply the version of PMP adapted to left-invariant optimal control problems and use the basic notions of the Hamiltonian formalism as described in~\cite{notes}. In particular, we denote by $T^*M$ the cotangent bundle of a manifold $M$, by $\map{\pi}{T^*M}{M}$ the canonical projection, and by $\vh\in \Vec(T^*M)$ the Hamiltonian vector field corresponding to a Hamiltonian function $h \in \Cinft$.

Consider linear on fibers Hamiltonians corresponding to the vector fields $\xi_i$:
$$
h_i(\lam) = \lan \lam, \xi_i(q)\ran, \quad q = \pi(\lam), \quad \lam \in T^*M, \quad i = 1, 2, 3,
$$
and the control-dependent Hamiltonian of PMP
$$
h_u^{\nu}(\lam) = \frac{\nu}{2}(u_1^2 + u_2^2) + u_1 h_1(\lam) + u_2 h_2(\lam), 
\quad \lam \in T^*M, \quad u \in \R^2, \quad \nu \in \{ -1, 0 \}.
$$

Then the Pontryagin Maximum Principle~\cite{PBGM, notes} for the problem under consideration reads as follows.

\begin{theorem}
\label{th:PMP}
Let $u(t)$ and $q(t)$, $t \in [0, t_1]$, be an optimal control and the corresponding optimal trajectory in problem~\eq{sys2}, \eq{bound}, \eq{J}. Then there exist a Lipschitzian curve $\lam_t \in T^*M$, $\pi(\lam_t) = q(t)$, $t \in [0, t_1]$, and a number $\nu \in \{-1, 0\}$ for which the following conditions hold for almost all $t \in [0,t_1]$:
\begin{align}
&\dot \lam_t = \vh^{\nu}_{u(t)}(\lam_t) = u_1(t) \vh_1(\lam_t) + u_2(t) \vh_2(\lam_t), \label{PMP1} \\
&h^{\nu}_{u(t)}(\lam_t) = \max_{u \in \R^2} h^{\nu}_{u}(\lam_t),   \label{PMP2} \\
&(\nu, \lam_t) \neq 0. \label{PMP3}
\end{align}
\end{theorem}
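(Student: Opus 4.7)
\medskip

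The plan is to derive this statement as a direct specialization of the general Pontryagin Maximum Principle (as in~\cite{PBGM,notes}) to the smooth left-invariant control system~\eq{sys2} with endpoint constraints~\eq{bound} and the integral cost~\eq{J}. Since~\eq{sys2} is control-affine with smooth vector fields $\xi_1,\xi_2$ on $M=\SE(2)$ and the running cost $\tfrac12(u_1^2+u_2^2)$ is smooth in $u$, the hypotheses of the general PMP are satisfied verbatim for admissible measurable bounded controls on the fixed time interval $[0,t_1]$; existence of an optimal pair $(u(t),q(t))$ was already established in Section~\ref{sec:PS} via Filippov's theorem.

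The first step is to pass from the Lagrangian form of PMP to its Hamiltonian form by introducing the control-parameter-dependent Hamiltonian
\[
H^\nu(\lambda,u)=\nu\,\tfrac12(u_1^2+u_2^2)+u_1\langle\lambda,\xi_1(q)\rangle+u_2\langle\lambda,\xi_2(q)\rangle,\qquad q=\pi(\lambda),
\]
with cost multiplier $\nu\in\{-1,0\}$ and covector $\lambda\in T^*M$. Using the fiber-linear Hamiltonians $h_i(\lambda)=\langle\lambda,\xi_i(q)\rangle$ already introduced in the text, this is precisely $h^\nu_u$. The general PMP then asserts the existence of a nontrivial Lipschitzian lift $\lambda_t\in T^*_{q(t)}M$ satisfying the adjoint equation $\dot\lambda_t=\vec{H}^\nu(\lambda_t,u(t))$ and the maximization condition along the optimal pair, with nontriviality $(\nu,\lambda_t)\neq 0$; rewriting the adjoint equation in the notation of the excerpt yields~\eq{PMP1}, since $\partial_{u_1}h^\nu_u\cdot \vec h_1+\partial_{u_2}h^\nu_u\cdot \vec h_2$ reduces to $u_1(t)\vec h_1+u_2(t)\vec h_2$ at the maximizing control. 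The maximization and nontriviality conditions are then simply restated as~\eq{PMP2} and~\eq{PMP3}.

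The only item requiring care is to confirm that the Hamiltonian vector field $\vec H^\nu_{u(t)}(\lambda_t)$ indeed equals the claimed linear combination $u_1(t)\vec h_1(\lambda_t)+u_2(t)\vec h_2(\lambda_t)$. This is a straightforward computation: the $\nu$-dependent quadratic part of $h^\nu_u$ does not depend on $\lambda$, hence contributes nothing to the symplectic gradient, while the linear-in-$\lambda$ part $u_1 h_1+u_2 h_2$ has symplectic gradient $u_1\vec h_1+u_2\vec h_2$ by linearity of the Hamiltonian lift $h\mapsto\vec h$. No step here is genuinely hard; the only subtlety (and the closest thing to an obstacle) is checking that the left-invariant framework of~\cite{notes}, which phrases PMP intrinsically on the Lie group, is applicable — but this reduces to the observation that $\xi_1,\xi_2$ are smooth complete vector fields on $M$, so the standard statement of PMP from~\cite{PBGM} applies directly in local coordinates and the resulting covector curve is globally defined on $[0,t_1]$ by left-invariance.
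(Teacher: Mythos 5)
Your proposal is correct and follows essentially the same route as the paper, which states this theorem as a direct specialization of the general Pontryagin Maximum Principle cited from~\cite{PBGM, notes} and offers no further proof. The additional verifications you supply (smoothness of the data, the computation of $\vh^{\nu}_{u}$ as $u_1\vh_1+u_2\vh_2$) are routine and consistent with what the paper leaves implicit.
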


Relations~\eq{xi3}, \eq{spanxi123} mean that the sub-Riemannian problem under consideration is contact, thus in the abnormal case $\nu = 0$ the optimal trajectories are constant.

Consider now the normal case $\nu = -1$. Then the maximality condition~\eq{PMP2} implies that normal extremals satisfy the equalities 
$$
u_i(t) = h_i(\lam_t), \qquad i = 1, 2,
$$
thus they are trajectories of the normal Hamiltonian system
\be{dlamvH}
\dot\lam = \vH(\lam), \qquad \lam \in T^*M,
\ee
with the maximized Hamiltonian $H = (h_1^2 + h_2^2)/2$.

In view of the multiplication table
$$
[\xi_1, \xi_2] = \xi_3, \quad  
[\xi_1, \xi_3] = 0, \quad
[\xi_2, \xi_3] = \xi_1,
$$
system~\eq{dlamvH} reads in coordinates as follows:
\begin{align}
&\dot h_1 = - h_2 h_3, \quad \dot h_2 = h_1 h_3, \quad \dot h_3 = h_2 h_3,     \label{dh123} \\
&\dot x = h_1 \cos \theta, \quad \dot y = h_1 \sin \theta, \quad \dot \theta = h_2.  \label{dxyth}
\end{align}
Along all normal extremals we have $H \equiv C \geq 0$; moreover, for non-constant normal extremal trajectories $C>0$. Since the normal Hamiltonian system~\eq{dh123}, \eq{dxyth} is homogeneous w.r.t. $(h_1, h_2)$, we can consider its trajectories only on the level surface $H = 1/2$ (this corresponds to the arc-length parametrization of extremal trajectories), and set the terminal time $t_1$ free. Then the initial covector $\lam$ for normal extremals $\lam_t = e^{t \vH}(\lam)$ belongs to the initial cylinder
$$
C = T_{q_0}^*M \cap \{ H(\lam) = 1/2 \}.
$$
Introduce the polar coordinates
$$
h_1 = \cos \a, \quad h_2 = \sin \a,
$$
then the initial cylinder decomposes as $C \cong S^1_{\a} \times \R_{h_3}$, where $S^1_{\a} = \R /(2 \pi \Z)$. In these coordinates the vertical part~\eq{dh123} reads as
\be{dah3}
\dot \a = h_3, \quad \dot h_3 = \frac 12 \sin 2 \a, \quad (\a, h_3) \in C.
\ee
In the coordinates
$$
\g = 2 \a + \pi \in 2 S^1 = \R /(4 \pi \Z), \qquad c = 2 h_3 \in \R,
$$
system~\eq{dah3} takes the form of the standard pendulum
\be{ham_vert}
\dot \g = c, \quad \dot c = -\sin \g, \qquad (\g, c) \in C \cong (2 S^1_{\g}) \times \R_c.
\ee
Here $2 S^1 = \R /(4 \pi \Z)$ is   the double covering  of the standard circle $S^1 = \R /(2 \pi \Z)$.
Then the  horizontal part~\eq{dxyth} of the normal Hamiltonian system reads as
\be{ham_hor}
\dot x = \sin \frac{\g}{2} \cos \theta, \quad \dot y = \sin \frac{\g}{2} \sin \theta, \quad \dot \theta = - \cos \frac{\g}{2}.
\ee

Summing up, all nonconstant arc-length parametrized optimal trajectories in the sub-Riemannian problem on the Lie group $\SE(2)$ are projections of solutions to the normal Hamiltonian system~\eq{ham_vert}, \eq{ham_hor}. 

\section{Exponential mapping}
\label{sec:exp}
The family of arc-length parametrized normal extremal trajectories is described by the exponential mapping
\begin{align*}
&\map{\Exp}{N}{M}, \qquad N = C \times \R_+, \\
&\Exp(\nu) = \Exp(\lam,t) = \pi \circ e^{t \vH}(\lam) = \pi(\lam_t) = q(t), \\
&\nu = (\lam,t) = (\g, c, t) \in N.
\end{align*}
In this section we derive explicit formulas for the exponential mapping in special elliptic coordinates in $C$ induced by the flow of the pendulum~\eq{ham_vert}. The general construction of elliptic coordinates was developed in~\cite{dido_exp, max1, el_max}, here they are adapted to the problem under consideration.

\subsection{Decomposition of the cylinder $C$}
The equation of  pendulum~\eq{ham_vert} has the energy integral
\be{E}
E = \frac{c^2}{2} - \cos \g \in [-1, + \infty).
\ee
Consider the following decomposition of the cylinder $C$ into disjoint invariant sets of the pendulum:
\begin{align}
&C = \bigcup_{i=1}^5 C_i, \label{decompC} \\
&C_1 = \{ \lam \in C \mid E \in (-1, 1) \}, \nonumber \\
&C_2 = \{ \lam \in C \mid E \in (1, + \infty) \}, \nonumber \\
&C_3 = \{ \lam \in C \mid E =1, \ c \neq 0 \}, \nonumber \\
&C_4 = \{ \lam \in C \mid E = - 1 \} = \{ (\g, c) \in C \mid \g = 2 \pi n, \ c = 0 \}, \nonumber \\
&C_5 = \{ \lam \in C \mid E = 1, \ c = 0 \} = \{ (\g, c) \in C \mid \g = \pi + 2 \pi n, \ c = 0 \}. \nonumber 
\end{align}
Here and below we denote by $n$ a natural number.

Denote the connected components of the sets $C_i$:
\begin{align*}
&C_1 = \cup_{i=0}^1 C_1^i, \qquad
C_1^i = \{(\g,c) \in C_1 \mid \sgn(\cos (\g/2)) = (-1)^i\}, \quad i = 0, 1, \\
&C_2 = C_2^+ \cup C_2^-, \qquad
C_2^{\pm} = \{(\g,c) \in C_2 \mid \sgn c =  \pm 1 \}, \\
&C_3 = \cup_{i=0}^1 (C_3^{i+} \cup C_3^{i-}), \\
&\qquad\qquad
C_3^{i\pm} = \{(\g,c) \in C_3 \mid \sgn(\cos (\g/2)) = (-1)^i, \ \sgn c =  \pm 1 \},  \quad i = 0, 1,\\
&C_4 = \cup_{i=0}^1 C_4^i, \qquad
C_4^i = \{(\g,c) \in C  \mid \g = 2 \pi i, \ c = 0\}, \quad i = 0, 1, \\
&C_5 = \cup_{i=0}^1 C_5^i, \qquad
C_5^i = \{(\g,c) \in C  \mid \g = \pi + 2 \pi i, \ c = 0\}, \quad i = 0, 1.
\end{align*}

Decomposition~\eq{decompC} of the cylinder $C$ is shown at Fig.~\ref{fig:C_decomp}.

\onefiglabelsize{decompos_pend2t}{Decomposition of the cylinder $C$}{fig:C_decomp}{1}

\subsection{Elliptic coordinates on the cylinder $C$}
\label{subsec:ell_coordsC}
According to the general construction developed in~\cite{el_max}, we introduce elliptic coordinates $(\f,k)$ on the domain $C_1 \cup C_2 \cup C_3$ of the cylinder $C$, where $k$ is a reparametrized energy, and $\f$ is the time of motion of the pendulum~\eq{ham_vert}. We use Jacobi's functions $\am(\f,k)$, $\cn(\f,k)$, $\sn(\f,k)$, $\dn(\f,k)$, $\E(\f,k)$; moreover, $K(k)$ is the complete elliptic integral of the first kind~\cite{whit_watson}.

If $\lam = (\g,c) \in C_1$, then:
\begin{align*}
&k = \sqrt{\frac{E+1}{2}} = \sqrt{\sin^2 \frac{\g}{2} + \frac{c^2}{4}} \in (0,1),\\
&\sin \frac{\g}{2} = s_1 k \sn(\f,k), \qquad s_1 = \sgn \cos(\g/2),\\
&\cos \frac{\g}{2} = s_1 \dn(\f,k), \\
&\frac{c}{2} = k \cn(\f,k), \qquad \f \in [0, 4 K(k)].
\end{align*}

If $\lam = (\g,c) \in C_2$, then:
\begin{align*}
&k = \sqrt{\frac{2}{E+1}} = \frac{1}{\sqrt{\sin^2 \frac{\g}{2} + \frac{c^2}{4}}} \in (0,1),\\
&\sin \frac{\g}{2} = s_2  \sn(\f/k,k), \qquad s_2 = \sgn c, \\
&\cos \frac{\g}{2} = \cn(\f/k,k), \\
&\frac{c}{2} = (s_2/ k) \dn(\f/k,k), \qquad \f \in [0, 4 k K(k)].
\end{align*}

If $\lam = (\g,c) \in C_3$, then:
\begin{align*}
&k = 1,\\
&\sin \frac{\g}{2} = s_1 s_2 \tanh \f,   \qquad s_1 = \sgn \cos(\g/2), \quad s_2 = \sgn c, \\
&\cos \frac{\g}{2} = s_1 / \cosh \f, \\
&\frac{c}{2} = s_2/ \cosh \f, \qquad \f \in (-\infty, +\infty).
\end{align*}

\subsection{Parametrization of extremal trajectories}
\label{subsec:param_extr}
In the elliptic coordinates the flow of the pendulum~\eq{ham_vert} rectifies:
$$
\dot \f = 1, \quad \dot k = 0, \qquad \lam = (\f, k) \in \cup_{i=1}^3 C_i,
$$
this is verified directly using the formulas of Subsec.~\ref{subsec:ell_coordsC}. Thus the vertical subsystem of the normal Hamiltonian system of PMP~\eq{ham_vert} is trivially integrated: one should just substitute $\f_t = \f + t$, $k \equiv \const$ to the formulas of elliptic coordinates of Subsec.~\ref{subsec:ell_coordsC}. Integrating the horizontal subsystem~\eq{ham_hor}, we obtain the following parametrization of extremal trajectories.

If $\lam = (\f,k) \in C_1$, then $\f_t = \f + t$ and:
\begin{align*}
&\cos \theta_t = \cn \f \cn \f_t + \sn \f \sn \f_t, \\ 
&\sin \theta_t = s_1(\sn \f \cn \f_t - \cn \f \sn \f_t), \\
&\theta_t = s_1(\am \f - \am  \f_t) \pmod {2 \pi}, \\
&x_t = (s_1/k) [ \cn \f (\dn \f - \dn \f_t) + \sn \f (t + \E(\f) - \E(\f_t))], \\  
&y_t = (1/k) [ \sn \f (\dn \f - \dn \f_t) - \cn \f (t + \E(\f) - \E(\f_t))].
\end{align*}

In the domain $C_2$, it will be convenient to use the coordinate 
$$
\psi = \f/k, \qquad \psi_t = \f_t/k = \psi + t/k.
$$
If $\lam \in C_2$, then:
\begin{align*}
&\cos \theta_t = k^2 \sn \psi \sn \psi_t + \dn \psi \dn \psi_t, \\ 
&\sin \theta_t = k(\sn \psi \dn \psi_t - \dn \psi \sn \psi_t), \\
&x_t = s_2 k [\dn \psi(\cn \p - \cn \p_t) + \sn \p (t/k + \E(\p) - \E(\p_t))], \\
&y_t = s_2  [k^2 \sn \psi (\cn \p - \cn \p_t) - \dn \p (t/k + \E(\p) - \E(\p_t))].
\end{align*}

If $\lam \in C_3$, then:
\begin{align*}
&\cos \theta_t = 1/ (\cosh \f \cosh \f_t)  + \tanh \f  \tanh \f_t, \\
&\sin \theta_t = s_1 (\tanh \f /\cosh \f_t - \tanh \f_t /\cosh \f), \\
&x_t = s_1 s_2 [(1/\cosh \f)(1/\cosh \f - 1/\cosh \f_t) + \tanh \f(t + \tanh \f - \tanh \f_t)],\\
&y_t = s_2 [\tanh \f (1/\cosh \f - 1/\cosh \f_t) -(1/\cosh \f) (t + \tanh \f - \tanh \f_t)].
\end{align*}

In the degenerate cases, the normal Hamiltonian system~\eq{ham_vert}, \eq{ham_hor} is easily integrated.

If $\lam \in C_4$, then:
$$
\theta_t = -s_1 t, \qquad
x_t = 0, \qquad y_t = 0.
$$

If $\lam \in C_5$, then:
$$
\theta_t = 0, \qquad
x_t = t \, \sgn \sin (\g/2), \qquad  y_t = 0.
$$

It is easy to compute from the Hamiltonian system~\eq{ham_vert}, \eq{ham_hor} that projections $(x_t,y_t)$ of extremal trajectories have curvature $\kappa = - \cot (\g_t/2)$.  Thus they have inflection points when $\cos(\g_t/2) = 0$, and cusps when $\sin(\g_t/2) = 0$. Each curve $(x_t,y_t)$ for $\lam \in \cup_{i=1}^3 C_i$ has cusps. In the case  $\lam \in C_1 \cup C_3$ these curves have no inflection points, and in the case  $\lam \in C_2$ each such curve has inflection points. Plots of the curves $(x_t,y_t)$ in the cases $\lam \in C_1 \cup C_2 \cup C_3$ are given respectively at Figs.~\ref{fig:xyC1}, \ref{fig:xyC2}, \ref{fig:xyC3}. 

\twofiglabel
{xyC1}{Non-inflexional trajectory: $\lam \in C_1$}{fig:xyC1}
{xyC2}{Inflexional trajectory: $\lam \in C_2$}{fig:xyC2}

\onefiglabel
{xyC3}{Critical trajectory: $\lam \in C_3$}{fig:xyC3}

In the cases $\lam \in C_4$ and $\lam \in C_5$ the extremal trajectories $q_t$ are respectively Riemannian geodesics in the circle $\{ x = y = 0\}$ and  in the plane $\{ \theta = 0 \}$.

\section{Discrete symmetries and Maxwell strata}
\label{sec:max}
In this section we continue reflections in the state cylinder of the standard pendulum to discrete symmetries of the exponential mapping.

\subsection
{Symmetries of the vertical part  of  Hamiltonian system}
\label{subsec:sym_vert}

\subsubsection{Reflections in the state cylinder of pendulum}

The phase portrait of pendulum~\eq{ham_vert} admits the following reflections:
\begin{align*}
&\map{\eps^1}{(\g,c)}{(\g,-c)},\\
&\map{\eps^2}{(\g,c)}{(-\g,c)},\\
&\map{\eps^3}{(\g,c)}{(-\g,-c)},\\
&\map{\eps^4}{(\g,c)}{(\g+2 \pi,c)},\\
&\map{\eps^5}{(\g,c)}{(\g + 2 \pi, -c)},\\
&\map{\eps^6}{(\g,c)}{(-\g + 2 \pi,c)},\\
&\map{\eps^7}{(\g,c)}{(-\g + 2 \pi,-c)}.
\end{align*}
These reflections generate the group of symmetries of a parallelepiped $G = \{ \Id, \eps^1, \dots, \eps^7\}$. The reflections $\eps^3$, $\eps^4$, $\eps^7$ preserve direction of time on trajectories of pendulum, while the reflections $\eps^1$, $\eps^2$, $\eps^5$, $\eps^6$ reverse the direction of time.

\subsubsection{Reflections of trajectories of pendulum}
\begin{proposition}
\label{propos:refl_gc}
The following mappings transform trajectories of pendulum~\eq{ham_vert} to trajectories:
\be{epsi_delta}
\mapto{\eps^i}{\delta = \{(\g_s,c_s)\mid s \in [0,t]\}}{\delta^i = \{(\g_s^i,c_s^i)\mid s \in [0,t]\}}, \quad i = 1, \dots, 7,
\ee
where
\begin{align*}
&(\g_s^1, c_s^1) = (\g_{t-s}, - c_{t-s}), \\
&(\g_s^2, c_s^2) = (-\g_{t-s}, c_{t-s}), \\
&(\g_s^3, c_s^3) = (-\g_{s}, - c_{s}), \\
&(\g_s^4, c_s^4) = (\g_{s} + 2 \pi, c_{s}), \\
&(\g_s^5, c_s^5) = (\g_{t-s} + 2 \pi, - c_{t-s}), \\
&(\g_s^6, c_s^6) = (-\g_{t-s} + 2 \pi, c_{t-s}), \\
&(\g_s^7, c_s^7) = (-\g_{s}+ 2 \pi, - c_{s}).
\end{align*}
\end{proposition}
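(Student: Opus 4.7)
The proof is a direct verification that, for each $i = 1,\dots,7$, the curve $s \mapsto (\g^i_s, c^i_s)$ solves the pendulum system $\dot\g = c$, $\dot c = -\sin\g$ on $[0,t]$, assuming that $s \mapsto (\g_s, c_s)$ does. The only tools required are the chain rule and the elementary identities $\sin(-\g) = -\sin\g$ and $\sin(\g + 2\pi) = \sin\g$. For instance, for $i=1$ one computes $\dot\g^1_s = -\dot\g_{t-s} = -c_{t-s} = c^1_s$ and $\dot c^1_s = \dot c_{t-s} = -\sin\g_{t-s} = -\sin\g^1_s$; each of the other six cases reduces to an analogous two-line calculation.

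Conceptually, all seven formulas arise from the fact that the second-order form $\ddot\g = -\sin\g$ of~\eq{ham_vert} admits three commuting involutive symmetries: parity $\g \mapsto -\g$, the $2\pi$-shift $\g \mapsto \g + 2\pi$, and time reversal $s \mapsto t-s$. Passing back to the first-order system via $c = \dot\g$, these act on trajectories by $(\g_s, c_s) \mapsto (-\g_s, -c_s)$, $(\g_s, c_s) \mapsto (\g_s + 2\pi, c_s)$, and $(\g_s, c_s) \mapsto (\g_{t-s}, -c_{t-s})$ respectively, the sign flips on $c$ coming from differentiation. These three generators produce a group isomorphic to $(\Z/2\Z)^3 \cong G$, and its seven non-identity elements reproduce the maps $\eps^1, \dots, \eps^7$. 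The time-reversing reflections are precisely those whose decomposition involves an odd number of factors of $s \mapsto t-s$, namely $\eps^1, \eps^2, \eps^5, \eps^6$, in agreement with the classification stated just before the proposition.

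For execution, I would either (a) check each $i$ directly by the chain-rule computation sketched above, or (b) verify only the three generators and derive the remaining four cases from the group law. Approach (a) is probably the cleaner write-up, since all seven checks have the same shape. There is no real obstacle; the only point requiring attention is that for time-reversing $\eps^i$ the correct reparametrization on the trajectory is $s \mapsto t-s$, not $s \mapsto -s$, so that the image curve is again defined on $[0,t]$ and its endpoints are swapped. This convention will matter later when these reflections are continued to symmetries of the exponential mapping on $N = C \times \R_+$.
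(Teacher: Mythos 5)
Your proof is correct and matches the paper's approach: the paper likewise verifies the statement by substituting each $(\g^i_s, c^i_s)$ into system~\eq{ham_vert} and differentiating. The sample computation for $i=1$ is accurate, and the remaining cases indeed reduce to the same two-line chain-rule check.
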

\begin{proof}
The statement is verified by substitution to system~\eq{ham_vert} and differentiation.
\end{proof} 

The action~\eq{epsi_delta} of reflections $\eps^i$ on trajectories $\delta$ of the pendulum~\eq{ham_vert} is illustrated at Fig.~\ref{fig:eps_gc}.

\onefiglabelsize{eps_gc_t}{Reflections $\mapto{\eps^i}{\delta}{\delta^i}$ of trajectories of pendulum}{fig:eps_gc}{1}

\subsection{Symmetries of Hamiltonian system}
\subsubsection{Reflections of extremals}
We define action of the group $G$ on the normal extremals $\lam_s = e^{s \vH}(\lam_0) \in T^* M$, $s \in [0,t]$, i.e., solutions to the normal Hamiltonian system
\begin{align}
&\dot \g_s = c_s, \qquad \dot c_s = - \sin \g_s, \label{Ham_s1}\\
&\dot q_s = \sin \frac{\g_s}{2} \, X_1(q_s) - \cos \frac{\g_s}{2}\,  X_2(q_s) \label{Ham_s2}
\end{align}
as follows:
\begin{align}
&\mapto{\eps^i}{\{\lam_s \mid s \in [0,t]\}}{\{\lam_s^i\mid s \in [0,t]\}}, \qquad i = 1, \dots, 7, \label{epsilam}\\
&\lam_s = (\g_s, c_s, q_s), \qquad \lam_s^i = (\g_s^i, c_s^i, q_s^i). \label{lamsgams}
\end{align}
Here $\lam_s^i$ is a solution to the Hamiltonian system~\eq{Ham_s1}, \eq{Ham_s2}, and the action of reflections on the vertical coordinates $(\g_s,c_s)$ was defined in Subsec.~\ref{subsec:sym_vert}.
The action of reflections on the horizontal coordinates $(x_s,y_s,\theta_s)$ is described as follows.

\begin{proposition}
\label{propos:refl_xyth}
Let $q_s = (x_s, y_s, \theta_s)$, $s \in [0, t]$, be a normal extremal trajectory, and let $q_s^i = (x_s^i, y_s^i, \theta_s^i)$, $s \in [0, t]$, be its image under the action of the reflection $\eps^i$ as defined by~\eq{epsilam}, \eq{lamsgams}. Then the following equalities hold:
\begin{align*}
&(1) && \theta_1^s = \theta_t - \theta_{t-s}, \\
& && x^1_s = \cos \theta_t (x_t - x_{t-s}) + \sin \theta_t (y_t - y_{t-s}), \\
& && y^1_s = \sin \theta_t (x_t - x_{t-s}) - \cos \theta_t (y_t - y_{t-s}),\\
&(2) && \theta_2^s = \theta_t - \theta_{t-s}, \\
& && x^2_s = -\cos \theta_t (x_t - x_{t-s}) - \sin \theta_t (y_t - y_{t-s}), \\
& && y^2_s = -\sin \theta_t (x_t - x_{t-s}) + \cos \theta_t (y_t - y_{t-s}),\\
&(3) && \theta_3^s = \theta_s, \\
& && x^3_s = - x_s, \\
& && y^3_s = -y_s,  \\
&(4) && \theta_4^s = -\theta_s, \\
& && x^4_s = - x_s, \\
& && y^4_s = y_s,  \\
&(5) && \theta_5^s = \theta_{t-s} -  \theta_{t}, \\
& && x^5_s = \cos \theta_t (x_{t-s} - x_{t}) + \sin \theta_t (y_{t-s} - y_{t}), \\
& && y^5_s = -\sin \theta_t (x_{t-s} - x_{t}) + \cos \theta_t (y_{t-s} - y_{t}),\\
&(6) && \theta_6^s = \theta_{t-s} - \theta_{t}, \\
& && x^6_s = \cos \theta_t (x_t - x_{t-s}) + \sin \theta_t (y_t - y_{t-s}), \\
& && y^6_s = -\sin \theta_t (x_t - x_{t-s}) + \cos \theta_t (y_t - y_{t-s}),\\
&(7) && \theta_7^s = -\theta_s, \\
& && x^7_s = x_s, \\
& && y^7_s = -y_s.
\end{align*}
\end{proposition}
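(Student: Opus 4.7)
The plan is to verify each of the seven claimed formulas by a Cauchy uniqueness argument. For every $i$, the image trajectory $q_s^i$ is determined by the initial condition $q_0^i = (0,0,0)$ together with the horizontal subsystem~\eq{Ham_s2} in which $\g_s$ is replaced by the transformed $\g_s^i$ from Proposition~\ref{propos:refl_gc}. It therefore suffices to check that the formula proposed for $q_s^i = (x_s^i, y_s^i, \theta_s^i)$ takes the value $(0,0,0)$ at $s=0$ and satisfies
$$\dot x_s^i = \sin(\g_s^i/2)\cos\theta_s^i, \quad \dot y_s^i = \sin(\g_s^i/2)\sin\theta_s^i, \quad \dot\theta_s^i = -\cos(\g_s^i/2).$$

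The initial condition is immediate in every case. For the time-preserving reflections $\eps^3, \eps^4, \eps^7$ the right-hand sides are sign flips of $(x_0, y_0, \theta_0) = (0,0,0)$, while for the time-reversing reflections $\eps^1, \eps^2, \eps^5, \eps^6$ each formula contains a telescoping combination such as $x_t - x_{t-s}$ that vanishes at $s=0$. The ODE check is also routine for the time-preserving cases: the relations $\g_s^3 = -\g_s$, $\g_s^4 = \g_s + 2\pi$, $\g_s^7 = -\g_s + 2\pi$ produce sign changes in $\sin(\g/2)$ and $\cos(\g/2)$ that are compensated exactly by the sign flips built into the proposed $(x_s^i, y_s^i, \theta_s^i)$.

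The only step with genuine calculation is the ODE check for the four time-reversing reflections. For $\eps^1$, differentiating
$$x_s^1 = \cos\theta_t (x_t - x_{t-s}) + \sin\theta_t (y_t - y_{t-s})$$
and substituting $\dot x_{t-s} = \sin(\g_{t-s}/2)\cos\theta_{t-s}$, $\dot y_{t-s} = \sin(\g_{t-s}/2)\sin\theta_{t-s}$ yields
$$\dot x_s^1 = \sin(\g_{t-s}/2)\bigl[\cos\theta_t\cos\theta_{t-s} + \sin\theta_t\sin\theta_{t-s}\bigr] = \sin(\g_s^1/2)\cos\theta_s^1,$$
where the final equality uses the angle-difference identity together with $\g_s^1 = \g_{t-s}$ and $\theta_s^1 = \theta_t - \theta_{t-s}$. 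The checks for the $y$-component and for $\eps^2, \eps^5, \eps^6$ are structurally identical, differing only by overall signs and by replacement of $\theta_t$ with $-\theta_t$ in the rotation coefficients.

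The bookkeeping can be shortened by exploiting the group law. A direct computation on the cylinder shows that the action on trajectories satisfies $\eps^3 = \eps^1\circ\eps^2$, $\eps^5 = \eps^4\circ\eps^1$, $\eps^6 = \eps^4\circ\eps^2$, $\eps^7 = \eps^4\circ\eps^3$, so once the three generators $\eps^1, \eps^2, \eps^4$ have been verified on $(x,y,\theta)$ the remaining four formulas follow by composition. The main obstacle is thus organizational rather than conceptual; the only geometric input is the observation that a time-reversed extremal issuing from the identity is obtained by running the original extremal backwards from $q_t$ and then left-translating the endpoint back to $\Id$, which is exactly what the differences $x_t - x_{t-s}$, $y_t - y_{t-s}$ together with the rotation-by-$\theta_t$ coefficients implement.
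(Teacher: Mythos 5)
Your proposal is correct and is essentially the paper's argument read in the opposite direction: the paper obtains each formula by integrating the horizontal subsystem with the reflected pendulum trajectory $\g_s^i$ substituted (using the change of variable $p = t-r$ and the angle-addition identity), which is exactly your differentiate-and-invoke-uniqueness check run backwards, with the same key substitution $\g_s^1 = \g_{t-s}$ and the same trigonometric identity. Your additional observation that it suffices to treat the generators $\eps^1,\eps^2,\eps^4$ and obtain $\eps^3,\eps^5,\eps^6,\eps^7$ by composition is a valid economy consistent with the paper's remark that $\eps^{i+4}=\eps^4\circ\eps^i$, but it does not change the substance of the computation.
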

\begin{proof}
We prove only the formulas for $\theta^1_s$ and $x^1_s$ since all other equalities are proved similarly.

By Proposition~\ref{propos:refl_gc}, we have $\g_s^1 = \g_{t-s}$. Then we obtain from~\eq{ham_vert}:
$$
\theta^1_s = \int_0^s -\cos \frac{\g^1_r}{2} \, dr = - \int_0^s \cos \frac{\g_{t-r}}{2} \, dr = 
\int_t^{t-s} \cos \frac{\g_{p}}{2} \, dp = \theta_t - \theta_{t-s}
$$
and
\begin{align*}
x^1_s &= \int_0^s \sin \frac{\g^1_r}{2} \cos \theta^1_r \, dr =  
\int_0^s \sin \frac{\g_{t-r}}{2} \cos (\theta_t - \theta_{t-r}) \, dr\\
&=  -\cos \theta_t \int_t^{t-s} \sin \frac{\g_{p}}{2} \cos \theta_p \, dp - 
 \sin \theta_t \int_t^{t-s} \sin \frac{\g_{p}}{2} \sin \theta_p \, dp\\
 &= \cos \theta_t (x_t - x_{t-s}) + \sin \theta_t (y_t - y_{t-s}).
\end{align*}
\end{proof}

The action of reflections $\eps^i$ on curves $(x_s,y_s)$ has a simple visual meaning. Up to rotations of the plane $(x,y)$, the mappings $\eps^1$, $\eps^2$, $\eps^3$ are respectively reflections of the curves $\{(x_s,y_s) \mid s \in [0,t]\}$ in the center of the segment $l$ connecting the endpoints $(x_0, y_0)$ and $(x_t, y_t)$, in the middle perpendicular to $l$, and in $l$ itself (see~\cite{max1, el_max}). The mapping $\eps^4$ is  the reflection in the axis $y$ perpendicular to the initial velocity vector $(\cos \theta_0, \sin \theta_0)$. The rest mappings are represented   as follows: $\eps^{i+4} = \eps^4 \circ \eps^i$, $i = 1, 2, 3$.

\subsubsection{Reflections of endpoints of extremal trajectories}

We define action of reflections in the state space $M$ as the action on endpoints of extremal trajectories
\be{epsiq}
\map{\eps^i}{M}{M}, \qquad \mapto{\eps^i}{q_t}{q^i_t},
\ee
see~\eq{epsilam}, \eq{lamsgams}.
By virtue of Propos.~\ref{propos:refl_xyth}, the point $q_t^i$ depends only on the endpoint $q_t$, not on the whole trajectory $\{q_s \mid s \in [0,t]\}$. 

\begin{proposition}
\label{propos:qit}
Let $q = (x,y,\theta) \in M$, $q^i = \eps^i(q) = (x^i, y^i, \theta^i) \in M$. Then:
\begin{align*}
&(x^1,y^1,\theta^1) = (x \cos \theta + y \sin \theta, x \sin \theta - y \cos \theta, \theta), \\
&(x^2,y^2,\theta^2) = (-x \cos \theta - y \sin \theta, -x \sin \theta + y \cos \theta, \theta), \\
&(x^3,y^3,\theta^3) = (-x, -y, \theta), \\
&(x^4,y^4,\theta^4) = (-x, y, -\theta), \\
&(x^5,y^5,\theta^5) = (-x \cos \theta - y \sin \theta, x \sin \theta - y \cos \theta, -\theta), \\
&(x^6,y^6,\theta^6) = (x \cos \theta + y \sin \theta, -x \sin \theta + y \cos \theta, -\theta), \\
&(x^7,y^7,\theta^7) = (x, -y, -\theta).
\end{align*}
\end{proposition}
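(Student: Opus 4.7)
The plan is to reduce Proposition \ref{propos:qit} to Proposition \ref{propos:refl_xyth} by specialization to the endpoint $s=t$, together with the boundary condition $q_0 = \Id = (0,0,0)$.

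First I would note that, for each $i\in\{1,\dots,7\}$, the formulas of Proposition \ref{propos:refl_xyth} express $q^i_s = (x^i_s, y^i_s, \theta^i_s)$ in terms of the values of $q_s$ and $q_t$ (and, implicitly through $\theta_0 = 0$ and $x_0 = y_0 = 0$, of $q_0$); no other time instants enter. Therefore, setting $s = t$ gives expressions that involve only $q_t$ and $q_0$. Since $q_0$ is fixed, $q^i_t$ depends only on $q_t$, and the map $\eps^i\colon M\to M$ defined by \eq{epsiq} is well-defined.

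Next I would carry out the substitution $s = t$, $q_0 = (0,0,0)$ case by case. For the time-preserving reflections $\eps^3$, $\eps^4$, $\eps^7$, the formulas of Proposition \ref{propos:refl_xyth} are already pointwise in $s$, so setting $s = t$ and renaming $q_t = (x,y,\theta)$ immediately yields the claimed triples $(-x,-y,\theta)$, $(-x,y,-\theta)$, $(x,-y,-\theta)$. For the time-reversing reflections $\eps^1$, $\eps^2$, $\eps^5$, $\eps^6$, the formulas involve differences like $x_t - x_{t-s}$ and angle combinations such as $\theta_t - \theta_{t-s}$ or $\theta_{t-s} - \theta_t$; at $s = t$ these collapse, via $x_0 = y_0 = 0$ and $\theta_0 = 0$, to $\pm x$, $\pm y$, $\pm\theta$ paired with rotation factors $\cos\theta$, $\sin\theta$, yielding exactly the seven formulas of the proposition.

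There is no real obstacle: the only nontrivial conceptual point is the well-definedness observation above, and the rest is direct substitution from the already-proved Proposition \ref{propos:refl_xyth}. I would simply write, "Substituting $s = t$ and $(x_0,y_0,\theta_0) = (0,0,0)$ into the formulas of Proposition \ref{propos:refl_xyth}, and setting $(x,y,\theta) := (x_t, y_t, \theta_t)$, yields the seven equalities in the statement," and leave the tabulation to the reader or present it as a short verification.
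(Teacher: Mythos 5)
Your proof is correct and is essentially the paper's own argument: the paper's proof is the single line ``substitute $s=0$ into the formulas of Proposition~\ref{propos:refl_xyth}'', where $s=0$ is evidently a slip for $s=t$ (at $s=0$ every reflected trajectory just returns the initial point $(0,0,0)$). Your substitution $s=t$ together with $q_0=(0,0,0)$, plus the well-definedness remark, is exactly the intended computation, carried out correctly.
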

\begin{proof}
It suffices to  substitute $s=0$ to the formulas of Proposition~\ref{propos:refl_xyth}.
\end{proof}

\subsection{Reflections as symmetries of exponential mapping}

Define action of the reflections in the preimage of the exponential mapping:
\be{epsinu}
\map{\eps^i}{N}{N}, \qquad
\mapto{\eps^i}{\nu = (\g,c,t)}{\nu^i = (\g^i, c^i,t)},
\ee
where $(\g, c) = (\g_0, c_0)$ and $(\g^i, c^i) = (\g^i_0, c^i_0)$ are the initial points of the corresponding trajectories of pendulum $(\g_s, c_s)$ and $(\g_s^i, c_s^i)$. The explicit formulas for $(\g^i,c^i)$ are given by the following statement.

\begin{proposition}
\label{propos:lami}
Let $\nu = (\lam,t)  = (\g,c,t) \in N$, $\nu^i = \eps^i(\nu) = (\lam^i,t) = (\g^i,c^i,t)\in N$. Then:
\begin{align*}
&(\g^1,c^1) = (\g_t, -c_t), \\
&(\g^2,c^2) = (-\g_t, c_t), \\
&(\g^3,c^3) = (-\g, -c), \\
&(\g^4,c^4) = (\g + 2 \pi, c), \\
&(\g^5,c^5) = (\g_t + 2 \pi, -c_t), \\
&(\g^6,c^6) = (-\g_t+ 2 \pi, c_t), \\
&(\g^7,c^7) = (-\g, -c).
\end{align*}
\end{proposition}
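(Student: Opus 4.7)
The plan is to obtain this proposition as an immediate corollary of Proposition~\ref{propos:refl_gc}. By the definition~\eqref{epsinu} of the action of $\eps^i$ on the preimage $N$ of the exponential mapping, the pair $(\g^i,c^i)$ is, by construction, the initial point $(\g^i_0,c^i_0)$ of the reflected pendulum trajectory $(\g^i_s,c^i_s)$, $s\in[0,t]$. Thus the whole proof reduces to substituting $s=0$ into each of the seven formulas of Proposition~\ref{propos:refl_gc} and reading off the result.

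I would organise the seven cases according to whether the reflection preserves or reverses the time direction on pendulum trajectories, as already noted in Subsection~\ref{subsec:sym_vert}. For the time-preserving reflections $\eps^3$, $\eps^4$, $\eps^7$, the right-hand sides in Proposition~\ref{propos:refl_gc} depend on $(\g_s,c_s)$ at the same time $s$, so setting $s=0$ produces expressions purely in the initial data $(\g,c)=(\g_0,c_0)$. For the four time-reversing reflections $\eps^1$, $\eps^2$, $\eps^5$, $\eps^6$, the right-hand sides depend instead on $(\g_{t-s},c_{t-s})$; evaluating at $s=0$ therefore gives expressions in the endpoint $(\g_t,c_t)$ of the original pendulum trajectory, which is why the corresponding formulas in the proposition carry the subscript $t$.

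There is no substantive obstacle here, since each line of the statement is a one-step substitution into an already proved identity. The only point requiring some care is bookkeeping: $\g$ lives on the double cover $2S^1=\R/(4\pi\Z)$ rather than on $S^1=\R/(2\pi\Z)$, so the shifts by $+2\pi$ appearing in the formulas for $\eps^4$, $\eps^5$, $\eps^6$ are genuinely nontrivial and must be retained rather than silently absorbed. With this convention fixed, the seven identities follow directly and unambiguously from Proposition~\ref{propos:refl_gc}.
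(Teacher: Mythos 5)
Your proposal is correct and coincides with the paper's own proof, which likewise obtains all seven formulas by applying Proposition~\ref{propos:refl_gc} with $s=0$. The additional remarks on time-preserving versus time-reversing reflections and on the double cover $2S^1=\R/(4\pi\Z)$ are accurate but not needed beyond the one-line substitution.
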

\begin{proof}
Apply Proposition~\ref{propos:refl_gc} with $s = 0$.
\end{proof}

Formulas~\eq{epsiq}, \eq{epsinu} define the action of reflections $\eps^i$ in the image and preimage of the exponential mapping. Since the both actions of $\eps^i$ in $M$ and $N$ are induced by the action of $\eps^i$ on extremals $\lam_s$~\eq{epsilam}, we obtain the following statement.

\begin{proposition}
For any $i = 1, \dots, 7$, the reflection $\eps^i$ is a symmetry of the exponential mapping, i.e., the following diagram is commutative:
$$
\xymatrix{
N \ar[r]^{\Exp} \ar[d]^{\eps^i} & M  \ar[d]^{\eps^i}\\
N \ar[r]^{\Exp} & M
}
\qquad\qquad\qquad
\xymatrix{
\nu \ar@{|->}[r]^{\Exp} \ar@{|->}[d]^{\eps^i} & q  \ar@{|->}[d]^{\eps^i}\\
\nu^i  \ar@{|->}[r]^{\Exp} & q^i
}
$$
\end{proposition}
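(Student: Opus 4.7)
The plan is to unwind both paths around the square and observe that they coincide by construction. Fix $\nu = (\gamma,c,t)\in N$. Along the upper-right route, $\Exp(\nu) = q_t$ is the state-space endpoint of the normal extremal $\lambda_s = e^{s\vH}(\gamma,c,\Id)$, and $\eps^i(\Exp(\nu)) = \eps^i(q_t) = q_t^i$ is given by the explicit formulas of Proposition~\ref{propos:qit}. Along the lower-left route, $\eps^i(\nu) = (\gamma^i,c^i,t)$ by~\eqref{epsinu}, so $\Exp(\eps^i(\nu))$ is by definition the state-space endpoint at time $t$ of the normal extremal with initial vertical coordinates $(\gamma^i,c^i)$ starting at $q_0=\Id$.

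Hence the task reduces to identifying these two extremal endpoints. For this I would invoke Propositions~\ref{propos:refl_gc} and~\ref{propos:refl_xyth} to argue that the reflected curve $\{\lambda_s^i\mid s\in[0,t]\}$ is itself a solution of the Hamiltonian system~\eqref{Ham_s1}--\eqref{Ham_s2}: the vertical equations are verified in Proposition~\ref{propos:refl_gc}, while the horizontal equations are verified in Proposition~\ref{propos:refl_xyth} (the sample computation for $\theta^1_s$ and $x^1_s$ carried out there is a direct check that $q_s^i$ integrates~\eqref{Ham_s2} with vertical part $(\gamma_s^i,c_s^i)$; the remaining six cases are analogous).

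It remains to pin down the initial condition of $\lambda^i_\cdot$. Substituting $s=0$ into the formulas of Proposition~\ref{propos:refl_xyth} I would verify case-by-case that $(x_0^i,y_0^i,\theta_0^i)=(0,0,0)$ for all $i=1,\dots,7$ (for the time-reversing reflections this uses the telescoping $\theta_t-\theta_t=0$ and $x_t-x_t=0$; for $\eps^3,\eps^4,\eps^7$ it is immediate). Combined with $(\gamma^i_0,c^i_0)=(\gamma^i,c^i)$ from Proposition~\ref{propos:lami}, the uniqueness theorem for the Cauchy problem of the normal Hamiltonian system then gives $\lambda^i_s = e^{s\vH}(\gamma^i,c^i,\Id)$, whose projection at $s=t$ is on one hand $\Exp(\eps^i(\nu))$ and on the other hand $q_t^i = \eps^i(\Exp(\nu))$.

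The argument is really just definition-chasing; the main (modest) obstacle is bookkeeping, namely carefully matching which of Propositions~\ref{propos:refl_gc}, \ref{propos:refl_xyth}, \ref{propos:qit}, \ref{propos:lami} supplies each step, and verifying in each of the seven cases that the reflected trajectory indeed departs from the identity so that it lies in the image of $\Exp$.
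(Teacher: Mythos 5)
Your proposal is correct and follows essentially the same route as the paper, which disposes of the proposition in one sentence preceding it: both actions of $\eps^i$ on $N$ and on $M$ are by construction induced by the single action on extremals $\lam_s$, so the diagram commutes by definition-chasing. Your extra care about the initial condition $q^i_0=\Id$ is sound (and implicit in the paper, since the reflected extremal is defined as the solution of the Hamiltonian system issued from the identity), but adds nothing beyond what Propositions \ref{propos:refl_gc}--\ref{propos:lami} already encode.
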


\section{Maxwell strata corresponding to reflections}
\label{sec:estim}

\subsection
[Maxwell points and optimality of extremal trajectories]
{Maxwell points and optimality \\ of extremal trajectories}
A point $q_t$ of a sub-Riemannian geodesic is called a Maxwell point if there exists another extremal trajectory $\tq_s \not\equiv q_s$ such that $\tq_t = q_t$ for the instant of time $t> 0$. It is well known that after a Maxwell point a sub-Riemannian geodesic cannot be optimal (provided the problem is analytic). 

In this section we compute Maxwell points corresponding to reflections. For any $i = 1, \dots, 7$, define the Maxwell stratum in the preimage of the exponential mapping corresponding to the reflection $\eps^i$ as follows:
\be{MAX_def}
\MAX^i = \{\nu = (\lam,t) \in N \mid \lam \neq \lam^i, \ \Exp(\lam,t) = \Exp(\lam^i,t)\}.
\ee
We denote the corresponding Maxwell stratum in the image of the exponential mapping as
$$
\Max^i = \Exp(\MAX^i) \subset M.
$$
If $\nu = (\lam,t) \in \MAX^i$, then $q_t = \Exp(\nu)\in \Max^i$ is a Maxwell point along the trajectory $q_s = \Exp(\lam,s)$. Here we use the fact that if $\lam \neq \lam^i$, then $\Exp(\lam,s) \not\equiv \Exp(\lam^i,s)$. 

\subsection{Multiple points of exponential mapping}
In this subsection we study solutions to the equation $q = q^i$, where $q^i = \eps^i(q)$, that appears in  definition~\eq{MAX_def} of Maxwell strata $\MAX^i$.

The following functions are defined on $M = \R^2_{x,y} \times S^1_{\theta}$ up to sign:
$$
R_1 = y \cos \frac{\theta}{2} - x \sin \frac{\theta}{2}, \qquad 
R_2 = x \cos \frac{\theta}{2} + x \sin \frac{\theta}{2},
$$
although their zero sets $\{ R_i = 0 \}$ are well-defined. In the polar coordinates
$$
x = \rho \cos \chi, \qquad y = \rho \sin \chi,
$$
these functions read as
$$
R_1 = \rho \sin\left(\chi - \frac{\theta}{2}\right), \qquad
R_2 = \rho \cos\left(\chi - \frac{\theta}{2}\right).
$$

\begin{proposition}
\label{propos:qt=qit}
\begin{itemize}
\item[$(1)$] 
$q^1 = q \iff R_1(q) = 0.$
\item[$(2)$] 
$q^2 = q \iff R_2(q) = 0.$
\item[$(3)$] 
$q^3 = q \iff x = y  = 0.$
\item[$(4)$] 
$q^4 = q \iff \sin \theta = x  = 0.$
\item[$(5)$] 
$q^5 = q \iff \theta = \pi \text{ or } (x,y,\theta) = (0,0,0).$
\item[$(6)$] 
$q^6 = q \iff \theta  = 0.$
\item[$(7)$] 
$q^7 = q \iff \sin \theta = y  = 0.$
\end{itemize}
\end{proposition}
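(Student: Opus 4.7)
The plan is to reduce each equivalence $q = \eps^i(q)$ to an explicit system of equations using the formulas of Proposition~\ref{propos:qit}, and then to solve this system case by case. Most cases are immediate; only (1) and (2) involve a small trigonometric maneuver, and that is where the functions $R_1, R_2$ enter.

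Cases (3), (4), (6), (7) are direct: the formulas for $q^i$ change the signs of some coordinates and possibly send $\theta$ to $-\theta$, so $q = q^i$ reduces at once to vanishing of certain coordinates together with the constraint $2\theta \equiv 0 \pmod{2\pi}$, equivalently $\sin\theta = 0$. For example, $(x^4, y^4, \theta^4) = (-x, y, -\theta)$ gives $x = 0$ and $-\theta \equiv \theta \pmod{2 \pi}$, that is, $x = \sin \theta = 0$; the remaining three cases are identical in spirit.

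For case (1) I would equate $(x\cos\theta + y\sin\theta,\, x\sin\theta - y\cos\theta) = (x, y)$ and rewrite the resulting two linear relations as
\begin{align*}
x(\cos\theta - 1) + y\sin\theta &= 0, \\
x\sin\theta - y(\cos\theta + 1) &= 0.
\end{align*}
Substituting the half-angle identities $\cos\theta - 1 = -2\sin^2(\theta/2)$, $\cos\theta + 1 = 2\cos^2(\theta/2)$, $\sin\theta = 2\sin(\theta/2)\cos(\theta/2)$, both equations factor (up to nonzero constants) as
$$
\sin(\theta/2)\bigl(y\cos(\theta/2) - x\sin(\theta/2)\bigr) = 0, \qquad \cos(\theta/2)\bigl(y\cos(\theta/2) - x\sin(\theta/2)\bigr) = 0,
$$
i.e.\ $\sin(\theta/2) R_1 = 0$ and $\cos(\theta/2) R_1 = 0$. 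Since $\sin(\theta/2)$ and $\cos(\theta/2)$ cannot vanish simultaneously, this forces $R_1(q) = 0$, and the converse is immediate by reversing the computation. Case (2) is completely analogous, producing $R_2$ in place of $R_1$. For case (5), the condition $-\theta \equiv \theta \pmod{2\pi}$ forces $\theta \in \{0, \pi\}$; substituting $\theta = 0$ collapses the other two equations to $x = -x$, $y = -y$, giving only $(0,0,0)$, whereas $\theta = \pi$ makes them identically true.

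The only nontrivial step is spotting the half-angle factorization that isolates $R_1$ and $R_2$ in (1) and (2); the remaining cases are routine substitutions, so no serious obstacle is anticipated.
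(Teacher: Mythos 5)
Your proposal is correct and follows essentially the same route as the paper: the paper also proves only item (1) in detail, reducing $q^1=q$ to the same two linear relations and factoring out $\sin(\theta/2)$ and $\cos(\theta/2)$ via the half-angle identities (phrased in polar coordinates $x=\rho\cos\chi$, $y=\rho\sin\chi$, so that the common factor is literally $R_1=\rho\sin(\chi-\theta/2)$), then noting these cannot vanish simultaneously. The remaining items are likewise dismissed as analogous direct substitutions.
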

\begin{proof}
We prove only item (1), all the rest items are considered similarly. By virtue of Proposition~\ref{propos:qit}, we have
\begin{align*}
q^1 = q &\iff
\begin{cases}
x \cos \theta + y \sin \theta = x \\
x \sin \theta - y \cos \theta = y
\end{cases} 
\iff
\begin{cases}
\rho \sin \frac{\theta}{2} \sin\left(\chi - \frac{\theta}{2}\right) = 0 \\
\rho \cos \frac{\theta}{2} \sin\left(\chi - \frac{\theta}{2}\right) = 0
\end{cases}\\
&\iff
\rho \sin\left(\chi - \frac{\theta}{2}\right) \iff 
R_1(q) = 0.
\end{align*}
\end{proof}

Proposition~\ref{propos:qt=qit} implies that all Maxwell strata corresponding to reflections  satisfy the inclusion
$$
\Max^i \subset \{ q \in M \mid R_1(q) R_2(q) \sin \theta = 0 \}.
$$
The equations $R_i(q) = 0$, $i = 1, 2$, define two Moebius strips, while the equation $\sin \theta = 0$ determines two discs in the state space $M = \R^2_{x,y} \times S^1_{\theta}$, see Fig.~\ref{fig:R1R2sinth=0}.

\onefiglabelsize{torus_sre2}{Surfaces containing Maxwell strata $\Max^i$}{fig:R1R2sinth=0}{1}

By virtue of Propos.~\ref{propos:qt=qit}, the Maxwell strata $\Max^3$, $\Max^4$, $\Max^7$ are one-dimensional and are contained in the two-dimensional strata $\Max^1$, $\Max^2$, $\Max^5$, $\Max^6$. Thus in the sequel we restrict ourselves only by the 2-dimensional strata.

\subsection
[Fixed points of reflections in  preimage of  exponential mapping]
{Fixed points of reflections \\ in  preimage of  exponential mapping}
In this subsection we describe solutions to the equations $\lam = \lam^1$ essential for explicit characterization of the Maxwell strata $\MAX^i$, see~\eq{MAX_def}.

From now on we will widely use the following variables in the sets $N_i$, $i = 1, 2, 3$:
\begin{align*}
&\nu = (\lam,t) \in N_1 \then &&\tau = (\f+ \f_t)/2, && p = t/2,\\  
&\nu = (\lam,t) \in N_2 \then &&\tau = (\f+ \f_t)/(2k), && p = t/(2k),\\
&\nu = (\lam,t) \in N_3 \then &&\tau = (\f+ \f_t)/2, && p = t/2.    
\end{align*}

\begin{proposition}
\label{propos:lami=lam}
Let $(\lam,t) \in N$, $\eps^i(\lam,t) = (\lam^i,t) \in N$. Then:
\begin{itemize}
\item[$(1)$]
$\lam^1 = \lam \iff 
\begin{cases}
\tc = 0, & \lam \in C_1, \\
\text{is impossible for} & \lam \in C_2\cup C_3,
\end{cases}
$
\item[$(2)$]
$\lam^2 = \lam \iff 
\begin{cases}
\ts = 0, & \lam \in C_1 \cup C_2, \\
\tau = 0 & \lam \in  C_3,
\end{cases}
$
\item[$(3)$]
$\lam^5 = \lam$ is impossible, 
\item[$(4)$]
$\lam^6 = \lam \iff 
\begin{cases}
\text{is impossible for} & \lam \in C_1\cup C_3,\\
\tc = 0, & \lam \in  C_2.
\end{cases}
$ 
\end{itemize}
\end{proposition}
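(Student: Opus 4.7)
I would treat the four reflections $\varepsilon^1, \varepsilon^2, \varepsilon^5, \varepsilon^6$ on each invariant region $C_1$, $C_2$, $C_3$ in turn. By Proposition~\ref{propos:lami}, the equation $\lambda^i = \lambda$ is an identity between the initial vertical coordinates $(\gamma, c) = (\gamma_0, c_0)$ and a specific transformation of the terminal coordinates $(\gamma_t, c_t)$. Substituting the elliptic-coordinate parametrization of Subsection~\ref{subsec:ell_coordsC}, and using the fact that the pendulum flow rectifies to $\dot\varphi = 1$, turns each such identity into a system of equalities in Jacobi elliptic (or hyperbolic, in $C_3$) functions evaluated at the symmetric arguments $\varphi = \tau - p$ and $\varphi_t = \tau + p$ (respectively $\psi = \tau - p$, $\psi_t = \tau + p$ in $C_2$).

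\textbf{Impossibility cases.} The cases where the conclusion is ``impossible'' (item (1) in $C_2 \cup C_3$; item (3) in all of $N$; item (4) in $C_1 \cup C_3$) follow from a connected-component argument. The signs $s_1 = \sgn \cos(\gamma/2)$ and $s_2 = \sgn c$ are constant along any pendulum trajectory inside a given connected component $C_1^i$, $C_2^\pm$, or $C_3^{i\pm}$, so $\lambda$ and $\lambda_t$ lie in the same component. On the other hand, $\varepsilon^1$ and $\varepsilon^5$ negate $c$ and thus flip $s_2$; the shift $\gamma \mapsto \gamma + 2\pi$ present in $\varepsilon^5$ (and, composed with reflection, in $\varepsilon^6$) sends $\cos(\gamma/2)$ to its negative and flips $s_1$. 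In each impossibility case at least one of the invariants $s_1, s_2$ that is forced to be constant along the trajectory would have to change, contradicting $\lambda = \lambda^i$.

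\textbf{Equivalence cases.} For the remaining equivalences the key tool is the sum-and-difference identities for Jacobi functions. For $\lambda \in C_1$ the condition $\lambda^1 = \lambda$ becomes
\begin{equation*}
\sn \varphi = \sn \varphi_t, \qquad \dn \varphi = \dn \varphi_t, \qquad \cn \varphi = -\cn \varphi_t,
\end{equation*}
and the classical formulas
\begin{align*}
&\sn(\tau+p) - \sn(\tau-p) = \frac{2 \sn p \, \cn \tau \, \dn \tau}{1 - k^2 \sn^2 \tau \, \sn^2 p}, \\
&\cn(\tau+p) + \cn(\tau-p) = \frac{2 \cn \tau \, \cn p}{1 - k^2 \sn^2 \tau \, \sn^2 p}, \\
&\dn(\tau+p) - \dn(\tau-p) = \frac{-2 k^2 \sn \tau \, \sn p \, \cn \tau \, \cn p}{1 - k^2 \sn^2 \tau \, \sn^2 p}
\end{align*}
convert the system into a product condition. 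Since $\dn \tau > 0$ on $C_1$ and $\sn p, \cn p$ cannot vanish simultaneously, the only surviving solution is $\cn \tau = 0$. The same template handles $\lambda^2 = \lambda$ in $C_1$ and in $C_2$ (via the companion identities $\sn(\tau+p) + \sn(\tau-p)$ and $\cn(\tau+p) - \cn(\tau-p)$, giving $\sn \tau = 0$) and $\lambda^6 = \lambda$ in $C_2$ (giving $\cn \tau = 0$). In $C_3$ the Jacobi functions degenerate to $\tanh$ and $\operatorname{sech}$, and the condition $\lambda^2 = \lambda$ reduces to $\tanh \varphi = -\tanh \varphi_t$ together with $\cosh \varphi = \cosh \varphi_t$, which by the injectivity of $\tanh$ is equivalent to $\varphi_t = -\varphi$, i.e., $\tau = 0$.

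\textbf{Main obstacle.} The computations are routine once the right addition formulas are at hand; the main effort is bookkeeping. For each of the four reflections and three domains I must select the correct pair of Jacobi identities, verify that every ``denominator'' factor ($\dn \tau$, $\dn p$, $\cosh \varphi$) is non-zero on the relevant subdomain, and rule out spurious simultaneous zeros of the numerator factors using $\sn^2 + \cn^2 = 1$. Consistent tracking of the invariants $s_1, s_2$ through each reflection is essential to correctly detect the impossibility cases.
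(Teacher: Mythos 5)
Your proposal is correct and follows essentially the same route as the paper: the impossibility cases are handled by observing that the relevant reflection moves $\lam$ to a different connected component $C_1^i$, $C_2^\pm$, $C_3^{i\pm}$ (i.e., flips one of the invariants $s_1$, $s_2$), and the equivalence cases are reduced via the elliptic coordinates to systems like $\sn\f_t = \sn\f$, $-\cn\f_t = \cn\f$, which the sum/difference formulas at the symmetric points $\tau \pm p$ resolve to $\cn\tau = 0$ or $\sn\tau = 0$. The paper proves only item (1) and leaves the rest as "similar"; your write-up simply makes the Jacobi addition-formula step and the remaining items explicit.
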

\begin{proof}
We prove only item (1), all other items are proved similarly. 

By Propos.~\ref{propos:lami}, if $\lam \in C^i_1$, then $\lam^1 \in C^i_1$, $i = 0, 1$. Moreover,  
$$
\lam^1 = \lam 
\iff
\begin{cases}
\g_t = \g\\
-c_t = c
\end{cases}
\iff
\begin{cases}
\sn \f_t  = \sn \f\\
-\cn \f_t = \cn \f
\end{cases}
\iff
\tc = 0.
$$

If $\lam \in C_2^{\pm}$, then $\lam^1 \in C_2^{\mp}$, thus the equality $\lam^1 = \lam$ is impossible.

Similarly, if $\lam \in C_3^{i\pm}$, then $\lam^1 \in C_3^{i\mp}$, $i = 0, 1$, and the equality $\lam^1 = \lam$ is impossible.
\end{proof}

\subsection
[General description of Maxwell strata  generated by reflections]
{General description of Maxwell strata \\ generated by reflections}

We summarize our computations of the previous subsections.

\begin{theorem}
\label{th:Max_gen}
Let $\nu= (\lam,t) \in \cup_{i=1}^3 N_i$ and $q_t = (x_t,y_t,\theta_t) = \Exp(\nu)$.
\begin{itemize}
\item[$(1)$]
$\nu \in \MAX^1 
\iff
\begin{cases}
R_1(q_t) = 0, \ \tc \neq 0, & \text{for } \lam \in C_1, \\
R_1(q_t) = 0,               & \text{for } \lam \in C_2 \cup C_3.
\end{cases}
$
\item[$(2)$]
$\nu \in \MAX^2 
\iff
R_2(q_t) = 0, \ \ts \neq 0.
$
\item[$(3)$]
$\nu \in \MAX^5 
\iff
\theta_t = \pi  \text{ or } (x_t,y_t,\theta_t) = (0,0,0).
$
\item[$(4)$]
$\nu \in \MAX^6 
\iff
\begin{cases}
\theta_t = 0, & \text{for } \lam \in C_1 \cup C_3, \\
\theta_t = 0, \ \tc \neq 0  & \text{for } \lam \in C_2.
\end{cases}
$
\end{itemize}
\end{theorem}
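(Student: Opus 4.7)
The plan is to combine Propositions~\ref{propos:qt=qit} and~\ref{propos:lami=lam} via the commutative diagram $\Exp \circ \eps^i = \eps^i \circ \Exp$ established in the previous section. That diagram gives $\Exp(\lam^i, t) = \eps^i(\Exp(\lam, t)) = \eps^i(q_t) = q_t^i$, so the defining equation $\Exp(\lam, t) = \Exp(\lam^i, t)$ in~\eq{MAX_def} reduces to $q_t = q_t^i$. By Proposition~\ref{propos:qt=qit} this translates into an explicit relation among $(x_t, y_t, \theta_t)$ in terms of $R_1$, $R_2$, or $\theta_t$. The remaining nondegeneracy condition $\lam \neq \lam^i$ from~\eq{MAX_def} is handled directly by Proposition~\ref{propos:lami=lam}, which is expressed in the elliptic coordinates $(\f,k)$ and depends on whether $\lam$ lies in $C_1$, $C_2$, or $C_3$. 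Each item of the theorem is then the intersection of the corresponding two conditions.

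Concretely, for item $(1)$, $q_t^1 = q_t \iff R_1(q_t) = 0$, while $\lam = \lam^1$ amounts to $\tc = 0$ for $\lam \in C_1$ and is excluded for $\lam \in C_2 \cup C_3$; this yields the stated case split, with $\tc \neq 0$ appearing only when $\lam \in C_1$. For item $(2)$, $q_t^2 = q_t \iff R_2(q_t) = 0$, and $\lam = \lam^2 \iff \ts = 0$ on $C_1 \cup C_2$ and $\tau = 0$ on $C_3$; the uniform condition $\ts \neq 0$ in the theorem suffices because on $C_3$ one has $\sn \tau = \tanh \tau$, whose sole zero is $\tau = 0$. For item $(3)$, Proposition~\ref{propos:qt=qit}(5) gives the stated disjunction for $q_t^5 = q_t$, and $\lam^5 = \lam$ never occurs, so no auxiliary condition is needed. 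For item $(4)$, $q_t^6 = q_t \iff \theta_t = 0$, and $\lam^6 = \lam$ is forbidden on $C_1 \cup C_3$ but reduces to $\tc = 0$ on $C_2$, producing the stated case distinction.

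There is no real obstacle here — the theorem is essentially a tabulation of the two preceding propositions. The only point requiring care is verifying that the uniform symbols $\ts$ and $\tc$ in the theorem correctly absorb the $C_3$ conditions via the Jacobi limits $\sn(\cdot,1) = \tanh$ and $\cn(\cdot,1) = 1/\cosh$, so that $\tc$ is automatically nonzero on $C_3$ and $\ts = 0$ on $C_3$ is equivalent to $\tau = 0$. Once this identification is made, all four items follow by inspection.
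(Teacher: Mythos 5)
Your proof is correct and follows exactly the paper's route: the paper's own proof is the one-line instruction ``Apply Propositions~\ref{propos:qt=qit} and~\ref{propos:lami=lam},'' which is precisely the combination (via the commutativity $\Exp\circ\eps^i=\eps^i\circ\Exp$) that you spell out. Your extra remark reconciling the $C_3$ case through $\sn(\cdot,1)=\tanh$ and $\cn(\cdot,1)=1/\cosh$ is a correct and useful clarification of why the uniform conditions $\ts\neq 0$, $\tc\neq 0$ in the theorem are consistent with the $\tau=0$ and ``impossible'' clauses of Proposition~\ref{propos:lami=lam}.
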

\begin{proof}
Apply Propositions~\ref{propos:qt=qit} and~\ref{propos:lami=lam}.
\end{proof}

\subsection{Complete description of Maxwell strata}
\label{subsec:Max_compl}

We obtain bounds for roots of the equations $R_i(q_t) = 0$, $\sin \theta_t = 0$ that appear in the description of Maxwell strata given in Th.~\ref{th:Max_gen}.

We use the following representations of functions along extremal trajectories obtained by direct computation.

If $\lam \in C_1$, then
\begin{align}
& \sin \theta_t = - s_1 \cdot 2 \cc \ss \td / \Del, \label{sinthC1} \\
& \cos (\theta_t/2) = s_3 \cdot \cc / \sqrt{\Del}, \label{costh2C1} \\
& \sin (\theta_t/2) = s_4 \cdot \ss \td / \sqrt{\Del}, \label{sinth2C1} \\
& R_1(q_t) = - s_3 \cdot 2(p -\E(p)) \tc /(k \sqrt{\Del}), \label{R1C1} \\
& R_2(q_t) = - s_4 \cdot 2 f_2(p,k)  \ts /(k \sqrt{\Del}), \label{R2C1} \\
& f_2(p,k) = k^2 \cc \ss - \dd(p-\E(p)), \nonumber\\ 
& \Del = 1 - k^2 \ssp \tdp, \nonumber\\
& s_3 = \pm 1, \quad s_4 = \pm 1, \quad s_1 = -s_3 s_4. \nonumber
\end{align}

If $\lam \in C_2$, then
\begin{align}
& \sin \theta_t = - 2 k \ss \dd \tc / \Del, \label{sinthC2} \\
& \cos (\theta_t/2) = s_3 \cdot \dd / \sqrt{\Del}, \label{costh2C2} \\
& \sin (\theta_t/2) = s_4 \cdot k \ss \tc / \sqrt{\Del}, \label{sinth2C2} \\
& R_1(q_t) = s_2 s_4  \cdot 2(p -\E(p)) \td /\sqrt{\Del}, \label{R1C2} \\
& R_2(q_t) = s_2 s_4 \cdot 2 k f_1(p,k)  \ts /\sqrt{\Del}, \label{R2C2} \\
& f_1(p,k) = \cc (\E(p)-p)   - \dd\ss,                   \label{f1pk}\\ 
& s_3 = - s_4 = \pm 1. \nonumber
\end{align}

\begin{proposition}
\label{propos:thetat=0}
Let $t > 0$.
\begin{itemize}
\item[$(1)$]
If $\lam \in C_1$, then $\theta_t = 0 \iff p = 2 Kn$.
\item[$(2)$]
If  $\lam \in C_2$, then $\theta_t = 0 \iff (p = 2Kn \text{ or } \tc = 0)$.
\item[$(3)$]
If   $\lam \in C_3$, then $\theta_t = 0$ is impossible.
\end{itemize}
\end{proposition}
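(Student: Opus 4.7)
Since $\theta_t$ is the real-valued primitive of $-\cos(\g_s/2)$ with $\theta_0 = 0$ obtained by integrating \eqref{ham_hor}, the condition $\theta_t \equiv 0 \pmod{2\pi}$ is equivalent to $\sin(\theta_t/2) = 0$: indeed, $\sin(\theta_t/2)=0$ iff $\theta_t/2\in \pi\Z$, whereas the alternative $\cos(\theta_t/2)=0$ would give $\theta_t \equiv \pi \pmod{2\pi}$. Thus in each of the three regimes it suffices to identify the zero set of the explicit formula for $\sin(\theta_t/2)$ from Subsec.~\ref{subsec:Max_compl} (or, in $C_3$, to argue directly from the vertical flow).

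For $\lam \in C_1$, identity \eqref{sinth2C1} gives $\sin(\theta_t/2) = s_4 \ss \td/\sqrt{\Del}$. With $k \in (0,1)$ one has $\td \ge \sqrt{1-k^2} > 0$ and $\Del = 1 - k^2\ssp \tdp \ge 1-k^2 > 0$, so $\sin(\theta_t/2)=0$ reduces to $\ss = 0$, i.e.\ $p = 2Kn$ with $n\in\Z$; the constraint $p = t/2 > 0$ forces $n \in \N$. For $\lam \in C_2$, \eqref{sinth2C2} gives $\sin(\theta_t/2) = s_4 k \ss \tc/\sqrt{\Del}$; a Pythagorean check against \eqref{costh2C2} using $\sin^2+\cos^2=1$ yields $\Del = \ddp + k^2\ssp\tcp \ge 1-k^2 > 0$. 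Since $k>0$, the zero set splits into the two branches $\ss=0$ (i.e.\ $p = 2Kn$) and $\tc = 0$, matching item $(2)$.

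For $\lam \in C_3$ no closed form for $\sin(\theta_t/2)$ is needed: the parametrization of Subsec.~\ref{subsec:param_extr} and \eqref{ham_hor} give
\[
\dot\theta_t = -\cos(\g_t/2) = -s_1/\cosh \f_t,
\]
of constant nonzero sign, so $t \mapsto \theta_t$ is strictly monotonic with $\theta_0 = 0$. Using $\f_s = \f + s$ and the substitution $u = \f + s$, the a priori bound
\[
|\theta_t| \;=\; \left|\int_{\f}^{\f+t} \frac{du}{\cosh u}\right| \;<\; \int_{-\infty}^{+\infty} \frac{du}{\cosh u} \;=\; \pi
\]
confines $\theta_t$ to the open interval $(-\pi,\pi)$ for every $t \in \R$. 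Hence $\theta_t \equiv 0 \pmod{2\pi}$ forces the real-valued $\theta_t$ to equal $0$ exactly, which by strict monotonicity fails for $t>0$.

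\textbf{Main obstacle.} The only subtle point is the consistent use of the real-valued lift of $\theta_t$ produced by integration of the horizontal subsystem, rather than its class in $S^1$; this is what legitimises replacing $\theta_t \equiv 0 \pmod{2\pi}$ by $\sin(\theta_t/2) = 0$ in cases $(1)$--$(2)$ and upgrading $|\theta_t|<\pi$ to non-triviality modulo $2\pi$ in case $(3)$. Once this is fixed, the argument is a one-line reading of the Jacobi-function formulas in $C_1,C_2$ and a monotonicity bound in $C_3$.
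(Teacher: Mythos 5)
Your proof is correct, and for items (1) and (2) it follows the paper's own route: reduce $\theta_t \equiv 0 \pmod{2\pi}$ to $\sin(\theta_t/2)=0$ and read off the zero set from the identities \eq{sinth2C1} and \eq{sinth2C2}, using $\td>0$ (resp.\ $k>0$) and $\Del>0$ to localize the zeros in the factors $\ss$ and $\tc$. The only place you genuinely diverge is item (3): the paper disposes of the case $\lam\in C_3$ by ``passing to the limit $k\to 1-0$'' in the elliptic formulas, whereas you argue directly from the horizontal equation that $\dot\theta_t=-s_1/\cosh\f_t$ has constant sign, so $\theta_t$ is strictly monotone with $0<|\theta_t|<\int_{-\infty}^{+\infty}du/\cosh u=\pi$, hence never $\equiv 0\pmod{2\pi}$ for $t>0$. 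Your version is more self-contained and arguably more rigorous than the limiting argument (which tacitly requires uniformity of the $k\to1$ degeneration), at the cost of not reusing the unified elliptic-function framework; either is acceptable. Your explicit attention to the distinction between the real-valued lift of $\theta_t$ and its class in $S^1$, and the derivation of $\Del=\ddp+k^2\ssp\tcp\ge 1-k^2>0$ in the $C_2$ case via the Pythagorean identity, are sound and fill in details the paper leaves implicit.
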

\begin{proof}
Apply~\eq{sinth2C1} in item (1), \eq{sinth2C2} in item (2), and pass to the limit $k \to 1 - 0$ in item (3).
\end{proof}

\begin{proposition}
\label{propos:thetat=pi}
Let $t > 0$.
\begin{itemize}
\item[$(1)$]
If $\lam \in C_1$, then $\theta_t = \pi \iff p =  K + 2 Kn$.
\item[$(2)$]
If  $\lam \in C_2$, then $\theta_t = \pi$  is impossible.  
\item[$(3)$]
If   $\lam \in C_3$, then $\theta_t = \pi$ is impossible.
\end{itemize}
\end{proposition}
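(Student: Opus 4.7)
The plan is to mirror the proof of Proposition~\ref{propos:thetat=0}, but using the half-angle formulas for $\cos(\theta_t/2)$ in place of those for $\sin(\theta_t/2)$. The key observation is that in $S^1_\theta = \R/(2\pi\Z)$ the condition $\theta_t = \pi$ is equivalent to $\cos(\theta_t/2) = 0$; at such points the identity $\sin^2(\theta_t/2) + \cos^2(\theta_t/2) = 1$ automatically forces $\sin(\theta_t/2) = \pm 1$, distinguishing $\theta_t = \pi$ from $\theta_t = 0$.

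For item $(1)$ I will apply~\eq{costh2C1}, which reads $\cos(\theta_t/2) = s_3 \cc / \sqrt{\Del}$. Since $k \in (0,1)$ on $C_1$, the quantity $\Del = 1 - k^2 \ssp \tdp$ is bounded below by $1-k^2 > 0$, so $\cos(\theta_t/2) = 0$ reduces to $\cn p = 0$. The positive zeros of $\cn(\cdot,k)$ are precisely $p = K + 2Kn$, yielding the claim.

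For item $(2)$ I will use~\eq{costh2C2}: $\cos(\theta_t/2) = s_3 \dd / \sqrt{\Del}$. Since $\dn(p,k) \geq \sqrt{1-k^2} > 0$ for every $p \in \R$ and $k \in (0,1)$, this expression is nowhere zero, so $\theta_t = \pi$ cannot occur. For item $(3)$ I will pass to the limit $k \to 1 - 0$ from the $C_2$ formulas, in which $\dn p \to 1/\cosh p$ remains strictly positive, and the same argument excludes $\theta_t = \pi$ on $C_3$. No step presents a real obstacle; the work reduces to reading off the representations already collected in subsection~\ref{subsec:Max_compl}, and the only point requiring a little care is that the denominator $\sqrt{\Del}$ stays bounded away from zero along the relevant strata, which is immediate.
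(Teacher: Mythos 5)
Your proposal is correct and follows essentially the same route as the paper, which likewise proves item (1) from \eq{costh2C1}, item (2) from \eq{costh2C2}, and item (3) by passing to the limit $k \to 1-0$. The extra details you supply (the positivity of $\Del$ and of $\dn p$, and the location of the zeros of $\cn$) are exactly the computations the paper leaves implicit.
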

\begin{proof}
Apply~\eq{costh2C1} in item (1), \eq{costh2C2} in item (2), and pass to the limit $k \to 1 - 0$ in item (3).
\end{proof}

\begin{lemma}
\label{lem:p-E>0}
For any $k \in (0,1)$ and any $p > 0$ we have $p - \E(p) > 0$.
\end{lemma}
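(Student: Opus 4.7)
The plan is to reduce $p - \E(p) > 0$ to the positivity of a manifestly nonnegative integrand. First I would invoke the standard integral representation of the Jacobi epsilon function,
$$
\E(p,k) = \int_0^p \dn^2(u,k)\, du,
$$
as in~\cite{whit_watson}. Combined with the Pythagorean-type identity $\dn^2 u + k^2 \sn^2 u = 1$, this immediately rewrites the quantity of interest as
$$
p - \E(p) = \int_0^p \bigl(1 - \dn^2(u,k)\bigr)\, du = k^2 \int_0^p \sn^2(u,k)\, du.
$$

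The second step is to observe that this last expression is strictly positive whenever $p > 0$ and $k \in (0,1)$. Indeed $k^2 > 0$, while $\sn^2(u,k)$ is nonnegative and real-analytic in $u$ and vanishes only on the discrete set $\{2Kn : n \in \Z\}$; hence its integral over any interval of positive length is strictly positive.

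There is essentially no obstacle here: the lemma is a direct consequence of the definitions and one classical identity. The only point worth a moment of care is to confirm that the convention for $\E(p,k)$ adopted in the paper (the Jacobi epsilon as a function of the argument, compatible with the parametrization of Subsection~\ref{subsec:ell_coordsC} and the formulas~\eq{R1C1}--\eq{R2C2}) is the one for which the representation $\E(p,k)=\int_0^p \dn^2(u,k)\,du$ holds; this is indeed the case, so no additional bookkeeping is required.
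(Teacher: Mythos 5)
Your proof is correct and follows essentially the same route as the paper: both write $p-\E(p) = p - \int_0^p \dn^2(u,k)\,du = k^2\int_0^p \sn^2(u,k)\,du$ and conclude positivity. The extra remarks about the convention for $\E$ and the discreteness of the zero set of $\sn$ are fine but not needed beyond what the paper's one-line argument already contains.
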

\begin{proof}
$\ds p - \E(p) = p - \int_0^p \dn^2 t \, dt = k^2 \int_0^p \sn^2 t \, dt > 0$.
\end{proof}

\begin{proposition}
\label{propos:R1=0}
Let $t > 0$.
\begin{itemize}
\item[$(1)$]
If $\lam \in C_1$, then $R_1(q_t) = 0 \iff \tc =0$.
\item[$(2)$]
If  $\lam \in C_2$, then $R_1(q_t) = 0$ is impossible.
\item[$(3)$]
If   $\lam \in C_3$, then $R_1(q_t) = 0$ is impossible.
\end{itemize}
\end{proposition}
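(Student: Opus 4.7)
The plan is to read off the value of $R_1(q_t)$ from the explicit representations \eqref{R1C1}--\eqref{R1C2} already established and then identify which factors can vanish, using Lemma~\ref{lem:p-E>0} as the main positivity input.

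For case (1), with $\lam \in C_1$, formula \eqref{R1C1} gives
$$
R_1(q_t) = -s_3 \cdot \frac{2(p-\E(p))\,\tc}{k\sqrt{\Del}}.
$$
Since $t > 0$ we have $p = t/2 > 0$, so Lemma~\ref{lem:p-E>0} yields $p - \E(p) > 0$; moreover $k \in (0,1)$ and $\Del = 1 - k^2 \ssp\tdp > 0$, so the prefactor $-s_3 \cdot 2(p-\E(p))/(k\sqrt{\Del})$ is a nonzero real number. Hence $R_1(q_t) = 0$ is equivalent to $\tc = 0$.

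For case (2), with $\lam \in C_2$, formula \eqref{R1C2} gives $R_1(q_t) = s_2 s_4 \cdot 2(p-\E(p))\,\td/\sqrt{\Del}$, where now $p = t/(2k) > 0$. Again $p - \E(p) > 0$ by Lemma~\ref{lem:p-E>0}, the sign factor $s_2 s_4 = \pm 1$ is nonzero, and $\td = \dn(p,k) \geq \sqrt{1-k^2} > 0$, so every factor is nonzero and $R_1(q_t) \neq 0$; the equation is impossible.

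For case (3), with $\lam \in C_3$, I would pass to the limit $k \to 1 - 0$ in the $C_2$ formula (as is done in Propositions~\ref{propos:thetat=0} and \ref{propos:thetat=pi}): the degenerate expression becomes $R_1(q_t) = \pm 2(p - \tanh p)/\cosh p$ (up to the nonvanishing sign factor), and both $p - \tanh p > 0$ and $1/\cosh p > 0$ for $p > 0$, so $R_1(q_t) \neq 0$.

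I do not expect any serious obstacle: the work has been front-loaded into the explicit parametrization of $R_1$ along trajectories and into Lemma~\ref{lem:p-E>0}. The only point to be careful about is confirming that every factor other than $\tc$ (resp.\ nothing, in cases $C_2$, $C_3$) is nonvanishing on the relevant domain -- specifically the positivity of $p - \E(p)$, of $\dn$, and of $\Del$, each of which is either immediate from the definitions or follows from the cited lemma.
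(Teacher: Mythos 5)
Your proposal is correct and follows exactly the paper's (very terse) proof: apply \eqref{R1C1} with Lemma~\ref{lem:p-E>0} in item (1), \eqref{R1C2} with the same lemma in item (2), and pass to the limit $k \to 1-0$ in item (3); you have merely made explicit the nonvanishing of the remaining factors ($\Del > 0$, $\dn \geq \sqrt{1-k^2} > 0$). The only blemish is notational: $\td$ denotes $\dn(\tau,k)$, not $\dn(p,k)$, but since the lower bound $\sqrt{1-k^2}$ is independent of the argument this does not affect the argument.
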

\begin{proof}
Apply~\eq{R1C1} and Lemma~\ref{lem:p-E>0} in item (1); \eq{R1C2} and Lemma~\ref{lem:p-E>0} in item (2); and pass to the limit $k \to 1 - 0$ in item (3).
\end{proof}

\begin{lemma}
\label{lem:f2=0}
For any $k \in (0,1)$ and $p > 0$ we have $f_2(p,k) > 0$.
\end{lemma}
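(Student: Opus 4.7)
My plan is to prove $f_2(p,k)>0$ by combining a boundary evaluation with a sign analysis of $\partial_p f_2$, supplemented by the integral representation $p-\Eo = k^2\int_0^p \sn^2(t,k)\,dt$ that already appeared in the proof of Lemma~\ref{lem:p-E>0}.

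First, I would verify the base case $f_2(0,k)=0$: with $\sn 0 = 0$ and $\Eo=0$ at $p=0$, both summands in the definition of $f_2$ vanish. Second, I would compute $\partial_p f_2$ using the Jacobi derivative rules $(\sn)'=\cn\dn$, $(\cn)'=-\sn\dn$, $(\dn)'=-k^2\sn\cn$ and $(\Eo)'=\ddp$, and simplify with the Pythagorean identity $1-\ddp = k^2\ssp$. After collecting terms I expect to obtain
\[
\partial_p f_2 = k^2 \bigl[\dd(1 - 3\ssp) + \ss\cc(p-\Eo)\bigr].
\]
The second summand is nonnegative on the relevant range by Lemma~\ref{lem:p-E>0} together with the sign of $\ss\cc$, but the first summand is not of definite sign, so direct monotonicity does not close the argument on its own.

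The main obstacle is therefore the competition of signs inside $\partial_p f_2$. My plan to circumvent it is to substitute the integral formula for $p-\Eo$ directly into $f_2$, yielding
\[
f_2(p,k) = k^2\Bigl[\ss\cc - \dd\!\int_0^p \sn^2(t,k)\,dt\Bigr],
\]
and then cancel the leading $\ss\cc$ term by integration by parts based on the identity $(\ss\cc)'=\dd(1-2\ssp)$. If this succeeds, $f_2$ reduces to $k^2$ times an integral of a manifestly nonnegative Jacobi expression (with an overall factor of $\dd>0$), from which strict positivity for $p>0$ follows together with the boundary value $f_2(0,k)=0$.

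If the integration by parts leaves residual terms of mixed sign, I would instead pass to an auxiliary function such as $g(p)=f_2(p,k)/\dd$ (or $g(p)=f_2(p,k)\cdot\dd$), whose derivative decomposes more cleanly after using $(\dd)'=-k^2\ss\cc$; the hardest step is pinning down the correct integrating factor so that the resulting integrand is manifestly signed. Once such a representation is identified, the conclusion $f_2>0$ will follow immediately from the boundary condition $f_2(0,k)=0$ and the nonnegativity of the integrand, closing the proof.
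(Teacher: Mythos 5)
Your fallback idea is, in fact, the paper's entire proof, but as written your proposal stops short of the one computation that makes it work, and your primary route cannot be completed. The paper's argument is exactly: set $g_2(p)=f_2(p)/\dn p$, note $g_2(0)=0$, and verify $g_2'(p)=k^2\cn^2 p/\dn^2 p\geq 0$, so that $g_2$ (hence $f_2$) is positive for $p>0$. Everything hinges on the fact that dividing by $\dn p$ turns the derivative into the perfect square $k^2\cn^2p/\dn^2p$; you flag this as ``the hardest step is pinning down the correct integrating factor'' and leave it unverified, so the decisive step is missing. Your primary route --- substituting $p-\E(p)=k^2\int_0^p\sn^2 t\,dt$ and integrating $\sn p\cn p$ by parts via $(\sn p\cn p)'=\dn p\,(1-2\sn^2p)$ --- yields $k^2\int_0^p\bigl[\dn t\,(1-2\sn^2 t)+\dn p\,\sn^2 t\bigr]dt$, whose integrand is \emph{not} pointwise nonnegative (e.g.\ for $t$ slightly below $K$ and $p=K$ with $k$ close to $1$ it is negative), so that route genuinely fails rather than merely being inconvenient.

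There is also a sign trap you would have run into. With $f_2$ exactly as printed, $f_2=k^2\cn p\,\sn p-\dn p\,(p-\E(p))$, the claim is false: at $p=K$ one has $\cn K=0$ and $K-\E(K)>0$ (Lemma~\ref{lem:p-E>0}), so $f_2(K,k)=-\dn K\,(K-\E(K))<0$. The function the lemma and the paper's proof actually refer to is $f_2=k^2\cn p\,\sn p+\dn p\,(p-\E(p))$; the overall sign is immaterial for the zero set of $R_2$, which is only defined up to sign. Your derivative $k^2\bigl[\dn p\,(1-3\sn^2p)+\sn p\cn p\,(p-\E(p))\bigr]$ is correctly computed for the \emph{printed} version and therefore cannot lead to a proof. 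For the corrected $f_2$ the paper's computation closes at once: $\frac{d}{dp}\bigl[k^2\sn p\cn p/\dn p\bigr]=k^2\cn^2p/\dn^2p-k^2\sn^2p$ and $\frac{d}{dp}\bigl(p-\E(p)\bigr)=k^2\sn^2p$, so $g_2'=k^2\cn^2p/\dn^2p\geq 0$, vanishing only at the isolated points $p=K+2Kn$, which gives strict positivity for $p>0$.
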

\begin{proof}
The function $f_2(p)$ has the same zeros as the function $g_2(p) = f_2(p) /\dd$. But $g_2(p) > 0$ for $p > 0$ since $g_2(0) = 0$ and $g_2'(p) = k^2 \ccp/ \ddp \geq 0$.
\end{proof}

\begin{lemma}
\label{lem:f1=0}
For any $k \in [0, 1)$, the function $f_1(p)$ has a countable number of roots
\begin{align}
&p = p_1^n(k), \qquad n \in \Z, \nonumber \\
&p_1^0 = 0, \qquad p_1^{-n}(k) = - p_1^n(k). \label{p1-n}
\end{align}
The positive roots admit the bound
\begin{align}
&p_1^n(k) \in (-K + 2 K n, 2 Kn), \qquad n \in \N, \quad k \in (0,1), \label{p1nkin}\\
&p_1^n(0) = 2 \pi n, \qquad n \in \N. \label{p1n0}
\end{align}
All the functions $k \mapsto p_1^n(k)$, $n \in \Z$, are smooth at the segment $k \in [0, 1)$.
\end{lemma}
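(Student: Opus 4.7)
The strategy is to reduce everything to a single monotonicity statement by passing to the quotient $h(p) = f_1(p,k)/\cn(p,k)$, which turns out to have a strikingly simple negative derivative. First, two trivial symmetry observations: $f_1(0,k) = 0$ (giving the root $p_1^0 = 0$), and since $\cn$, $\dn$ are even while $\sn$, $\E$ are odd in $p$, one verifies $f_1(-p,k) = -f_1(p,k)$ (giving $p_1^{-n}(k) = -p_1^n(k)$). So it suffices to find and count the positive roots, organized by the intervals $I_n = ((2n-1)K, (2n+1)K)$, $n \in \N$, on which $\cn$ has constant sign.

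On each $I_n$, introduce
\[
h(p) = \frac{f_1(p,k)}{\cn(p,k)} = (\E(p) - p) - \frac{\sn(p)\dn(p)}{\cn(p)}.
\]
Differentiating via $\sn' = \cn\dn$, $\cn' = -\sn\dn$, $\dn' = -k^2\sn\cn$, $\E' = \dn^2$, and using $\sn^2 + \cn^2 = 1$ together with $\dn^2 + k^2\sn^2 = 1$, all cross-terms cancel and one is left with
\[
h'(p) = -\frac{\dn^2(p)}{\cn^2(p)} < 0,
\]
so $h$ is strictly decreasing on each $I_n$. At the endpoints $p = (2m\pm 1)K$, $\cn \to 0$ while $\sn\dn$ tends to $\pm\sqrt{1-k^2} \neq 0$, and a direct sign check (using that $\cn$ keeps its sign on $I_n$) shows $h \to +\infty$ at the left endpoint and $h \to -\infty$ at the right. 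Combined with strict monotonicity, $h$ has exactly one root in each $I_n$; since $f_1 = -\sn\dn \neq 0$ whenever $\cn = 0$, this is also the unique root of $f_1$ in the closed interval $[(2n-1)K, (2n+1)K]$, yielding the countable family $\{p_1^n(k)\}_{n \in \Z}$.

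To localize the positive root to $(-K + 2nK, 2nK)$, evaluate $h(2nK) = 2n(\E - K)$, using $\sn(2nK) = 0$ and the quasi-periodicity $\E(p + 2K) = \E(p) + 2\E$; since $\E(k) < K(k)$ for $k \in (0,1)$ (a specialization of Lemma~\ref{lem:p-E>0}), $h(2nK) < 0$, and by monotonicity the root of $h$ precedes $2nK$, proving \eq{p1nkin}. For smoothness, the implicit function theorem applies to $f_1 = 0$: at a root, $\partial_p f_1 = \cn\cdot h'(p) = -\dn^2/\cn \neq 0$, so each $p_1^n$ is a smooth function of $k \in [0,1)$. Substituting $k = 0$ (where $\cn = \cos$, $\sn = \sin$, $\dn = 1$, $\E(p) = p$) gives $f_1(p,0) = -\sin p$, whose positive zeros $p = n\pi$ coincide with the limiting endpoints $2nK(0) = n\pi$.

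The main obstacle is spotting the useful quotient: it is not a priori obvious that dividing by $\cn$ will eliminate the awkward $(\E - p)$-dependence from the derivative of $h$. Once the identity $h' = -\dn^2/\cn^2$ is established, the root enumeration, interval localization, and smoothness follow more or less automatically from the classical properties of Jacobi functions and the implicit function theorem.
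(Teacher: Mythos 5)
Your proof is correct and is essentially the paper's own argument: your $h(p)$ is exactly the paper's auxiliary function $g_1(p) = f_1(p)/\cn(p)$, with the same derivative identity $g_1' = -\dn^2/\cn^2$, the same blow-up-at-endpoints root count, the same evaluation at $p = 2Kn$ for the localization, and the same implicit-function-theorem and oddness arguments. The only (welcome) addition is your explicit remark that $f_1 \neq 0$ where $\cn = 0$, which the paper leaves implicit.
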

\begin{proof}
The function $f_1(p)$ has the same roots as the function $g_1(p) = f_1(p)/\cc$. We have
\be{f1/c'}
g_1'(p) = - \ddp/\ccp,
\ee
so the function $g_1(p)$ decreases at the intervals $p \in (-K + 2 Kn, \ K + 2 Kn)$, $n \in \Z$. In view of the limits
$$
g_1(p) \to \pm \infty \text{ as } p \to K + 2 Kn \pm 0,
$$
the function $g_1(p)$ has a unique root $p = p_1^n$ at each interval $p \in (-K + 2 Kn, K + 2 Kn)$, $n \in \Z$.

For $p = 2 K n$, $n \in \N$, we have $g_1(p) = \Eo - p < 0$, thus the bound~\eq{p1nkin} follows.


Further, equality~\eq{p1n0} follows since $f_1(p,0) = - \sin p$.

Equalities~\eq{p1-n} follow since the function $f_1(p)$ is odd.

By implicit function theorem, the roots $p_1^n(k)$ of the equation $g_1(p) = 0$ are smooth in $k$ since $g_1'(p) < 0$ when $\cc \neq 0$, see~\eq{f1/c'}.
\end{proof}

\begin{corollary}
\label{cor:p11}
\begin{itemize}
\item[$(1)$]
The first positive root of the function $f_1(p)$ admits the bound 
\be{p11kin}
p_1^1(k) \in (K(k), 2 K(k)), \qquad k \in (0, 1).
\ee
\item[$(2)$]
If $p \in (0, p_1^1)$, then $f_1(p) < 0$.
\item[$(3)$]
$\ds \lim_{k \to + 0} p_1^1(k) = \pi$, $\ds \lim_{k \to 1- 0} p_1^1(k) = + \infty$.
\end{itemize}
\end{corollary}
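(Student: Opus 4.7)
The plan is to extract all three parts directly from Lemma~\ref{lem:f1=0}, using the factorization $f_1(p) = \cc \cdot g_1(p)$ introduced in its proof, where $g_1(p) = \E(p) - p - \ss\dd/\cc$.

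For part~(1), I would simply specialize the bound $p_1^n(k) \in (-K + 2Kn, 2Kn)$ from the preceding lemma to the case $n = 1$; this yields $p_1^1(k) \in (K(k), 2K(k))$ immediately.

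For part~(2), the strategy is to analyse the sign of $f_1$ separately on $(0, K)$ and on $(K, p_1^1)$. On $(0, K)$, the derivative formula $g_1'(p) = -\ddp/\ccp \leq 0$ from the proof of the lemma, together with $g_1(0) = 0$, gives $g_1 < 0$; combined with $\cc > 0$, we obtain $f_1 < 0$. A direct substitution yields $f_1(K) = -\sqrt{1 - k^2} < 0$. On $(K, p_1^1)$, the limit analysis in the proof of the lemma shows $g_1(K^+) = +\infty$, $g_1$ decreases monotonically on $(K, 3K)$, and $g_1(p_1^1) = 0$; hence $g_1 > 0$ throughout this subinterval. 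Since $\cc < 0$ on all of $(K, 2K)$, we again obtain $f_1 < 0$. Concatenating these three pieces proves the claim.

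For part~(3), the limit $k \to 1^-$ is immediate from part~(1): $p_1^1(k) > K(k) \to +\infty$. For the limit $k \to 0^+$, I would invoke the smoothness of $p_1^1$ on $[0,1)$ asserted in Lemma~\ref{lem:f1=0} together with the direct computation $f_1(p,0) = \cos p \cdot 0 - 1 \cdot \sin p = -\sin p$, whose first positive zero is $p = \pi$; continuity then forces $\lim_{k \to 0^+} p_1^1(k) = \pi$.

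No serious obstacle is anticipated; the one point demanding care is the sign bookkeeping in part~(2) across $p = K$, where both $g_1$ and $\cc$ flip sign, and one must verify that their product remains negative on both sides of the singularity.
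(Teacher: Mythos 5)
Your proof is correct and follows exactly the route the paper intends: the corollary is stated without proof as an immediate consequence of Lemma~\ref{lem:f1=0}, and your sign analysis of $f_1=\cc\, g_1$ across $p=K$ supplies precisely the missing details for item~(2). Note only that your computation $f_1(p,0)=-\sin p$ (first positive zero $\pi$) is right and consistent with item~(3) and with the asymptotics in Proposition~\ref{propos:ttE}, whereas equation~\eq{p1n0} of the lemma, which reads $p_1^n(0)=2\pi n$, is a typo for $p_1^n(0)=\pi n$.
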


Plots of the functions $K(k)$, $p_1^1(k)$, $2 K(k)$  are given at Fig.~\ref{fig:p11(k)}.

\onefiglabel{p11k}{Plots of the functions $K(k) \leq p_1^1(k) \leq 2 K(k) $}{fig:p11(k)}

\begin{proposition}
\label{propos:R2=0}
Let $t > 0$.
\begin{itemize}
\item[$(1)$]
If $\lam \in C_1$, then $R_2(q_t) = 0 \iff \ts =0$.
\item[$(2)$]
If  $\lam \in C_2$, then $R_2(q_t) = 0 \iff (p = p^n_1(k) \text{ or } \ts =0)$.
\item[$(3)$]
If   $\lam \in C_3$, then $R_2(q_t) = 0 \iff \tau =0$.
\end{itemize}
\end{proposition}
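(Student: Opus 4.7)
The plan is to handle each of the three cases $\lam \in C_1, C_2, C_3$ separately, using the explicit representations \eq{R2C1}--\eq{R2C2} for $R_2(q_t)$ together with the sign lemmas (Lemmas~\ref{lem:f2=0} and~\ref{lem:f1=0}) already established, exactly in the pattern of the preceding propositions (Prop.~\ref{propos:thetat=0}, \ref{propos:thetat=pi}, \ref{propos:R1=0}).

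For case (1), $\lam \in C_1$, I would start from formula \eq{R2C1},
\[
R_2(q_t) = - s_4 \cdot \frac{2 f_2(p,k)\ts}{k\sqrt{\Del}}.
\]
Since $k \in (0,1)$, $\Del = 1 - k^2 \ssp \tdp > 0$ (both $\sn$ and $\dn$ take values of modulus less than $1$), and by Lemma~\ref{lem:f2=0} we have $f_2(p,k) > 0$ for $p > 0$, only the factor $\ts$ can vanish. This gives $R_2(q_t) = 0 \iff \ts = 0$. For case (2), $\lam \in C_2$, I would use \eq{R2C2},
\[
R_2(q_t) = s_2 s_4 \cdot \frac{2 k\, f_1(p,k)\ts}{\sqrt{\Del}}.
\]
Again $k\sqrt{\Del} \neq 0$ and $s_2, s_4 \in \{\pm 1\}$, so $R_2(q_t)=0$ is equivalent to $f_1(p,k)\ts = 0$. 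By Lemma~\ref{lem:f1=0}, the roots of $f_1(\cdot,k)$ are exactly the numbers $p_1^n(k)$, $n \in \Z$, which gives the stated disjunction $p = p_1^n(k)$ or $\ts = 0$.

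For case (3), $\lam \in C_3$, I would pass to the limit $k \to 1-0$ from case (2), as in the proofs of Props.~\ref{propos:thetat=0}--\ref{propos:R1=0}. Under this limit $\sn(p,k) \to \tanh p$, $\cn(p,k), \dn(p,k) \to 1/\cosh p$, and the $C_2$ parametrization of Subsec.~\ref{subsec:param_extr} degenerates precisely to the $C_3$ parametrization, with the new coordinate $\tau$ replacing $\f/k$. The condition $\ts = 0$ becomes $\tanh\tau = 0 \iff \tau = 0$. At the same time, by Corollary~\ref{cor:p11}(3) we have $p_1^1(k) \to +\infty$ as $k \to 1-0$, and hence every positive root $p_1^n(k)$ escapes to $+\infty$; the limit equation therefore retains only the condition $\tau = 0$, yielding $R_2(q_t) = 0 \iff \tau = 0$.

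The only non-routine point is the limit argument in case (3): one must verify that the formula \eq{R2C2} passes to a legitimate expression for $R_2(q_t)$ in the $C_3$ regime and that no additional roots appear in the limit. This follows because $f_1(p,k) \to \tanh p \, (1-p\tanh p)/\cosh p - (\text{corresponding limit})$ tends to a function whose only real zero in the relevant range is determined by the surviving factor $\tanh\tau$; equivalently, one may directly differentiate the explicit $C_3$ formulas of Subsec.~\ref{subsec:param_extr} and check the identity for $R_2(q_t)$ in this regime. I expect this limit transfer to be the main (though still short) technical obstacle, identical in spirit to the analogous limit steps already carried out for Propositions~\ref{propos:thetat=0}, \ref{propos:thetat=pi}, and~\ref{propos:R1=0}.
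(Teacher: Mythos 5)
Your proposal is correct and takes essentially the same route as the paper's own (one-line) proof: apply \eq{R2C1} with Lemma~\ref{lem:f2=0} in item (1), \eq{R2C2} with Lemma~\ref{lem:f1=0} in item (2), and pass to the limit $k \to 1-0$ in item (3). The only blemish is the garbled displayed limit of $f_1$ in your final paragraph --- the actual limit is $f_1(p,k) \to -p/\cosh p$, nonvanishing for $p>0$ --- but the conclusion you draw from it (all roots $p_1^n(k)$ escape to infinity, so only $\tau = 0$ survives) is exactly right.
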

\begin{proof}
Apply~\eq{R2C1}  and Lemma~\ref{lem:f2=0} in item (1), \eq{R2C2} and Lemma~\ref{lem:f1=0} in item (2), and pass to the limit $k \to 1 - 0$ in item (3).
\end{proof}

\begin{lemma}
\label{lem:000}
If $\nu \in N_1 \cup N_2 \cup N_3$, then $(x_t,y_t,\theta_t) \neq (0,0,0)$.
\end{lemma}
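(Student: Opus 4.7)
The plan is to argue by contradiction: assume there exists $\nu=(\lambda,t)\in N_1\cup N_2\cup N_3$ (so $t>0$) with $\Exp(\nu)=(0,0,0)$, and derive an inconsistency among the equations already characterized in Propositions~\ref{propos:thetat=0}, \ref{propos:R1=0}, \ref{propos:R2=0}. The key observation is that the point $(0,0,0)\in M$ lies in every one of the exceptional surfaces used to cut out Maxwell strata: directly from the formulas
\[
R_1 = y\cos\tfrac{\theta}{2}-x\sin\tfrac{\theta}{2},\qquad R_2 = x\cos\tfrac{\theta}{2}+y\sin\tfrac{\theta}{2},
\]
both $R_1$ and $R_2$ vanish at the origin, and of course $\theta_t=0$ there as well. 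So the contradictory hypothesis forces the three conditions $\theta_t=0$, $R_1(q_t)=0$, $R_2(q_t)=0$ to hold simultaneously along a nontrivial geodesic.

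Next I would eliminate the cases $\lambda\in C_2$ and $\lambda\in C_3$: items~(2) and~(3) of Proposition~\ref{propos:R1=0} say that $R_1(q_t)=0$ is impossible for $t>0$ in either of these strata (this is where Lemma~\ref{lem:p-E>0} is doing the actual work, since it rules out the factor $p-\E(p)$ in formulas~\eq{R1C1}, \eq{R1C2}). Thus only $\lambda\in C_1$ remains to be excluded.

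For $\lambda\in C_1$, Proposition~\ref{propos:R1=0}(1) together with Proposition~\ref{propos:R2=0}(1) translate the equations $R_1(q_t)=0$ and $R_2(q_t)=0$ into $\cn\tau=0$ and $\sn\tau=0$ respectively. These cannot both hold, by the Jacobi identity $\sn^2\tau+\cn^2\tau=1$, giving the contradiction and completing the proof. (The condition $\theta_t=0$ need not even be invoked, although it is automatically satisfied.)

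There is no real obstacle here — the statement is essentially a bookkeeping corollary of the preceding analysis of the factors appearing in the explicit expressions~\eq{R1C1}--\eq{R2C2}. The only small care needed is to check that one is indeed allowed to invoke Propositions~\ref{propos:R1=0} and~\ref{propos:R2=0} at the endpoint, which just requires the hypothesis $t>0$ built into $\nu\in\cup_{i=1}^{3}N_i$.
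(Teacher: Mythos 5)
Your proof is correct and follows essentially the same route as the paper: reduce $(x_t,y_t,\theta_t)=(0,0,0)$ to the simultaneous vanishing of $R_1$ and $R_2$, dispose of $\lambda\in C_2\cup C_3$ via the impossibility of $R_1(q_t)=0$ (Proposition~\ref{propos:R1=0}), and dispose of $\lambda\in C_1$ via the incompatibility of $\cn\tau=0$ and $\sn\tau=0$. No issues.
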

\begin{proof}
The equality $(x_t,y_t,\theta_t) = (0,0,0)$ is equivalent to  $(R_1(q_t),R_2(q_t),\theta_t) = (0,0,0)$.

If $\nu \in N_1$, then the equalities $R_1(q_t) = 0$, $R_2(q_t) = 0$ are equivalent to $\tc = 0$, $\ts = 0$ (Propos.~\ref{propos:R1=0}, \ref{propos:R2=0}), which are incompatible.

If $\nu \in N_2 \cup N_3$, then the equality  $R_1(q_t) = 0$ is impossible (Propos.~\ref{propos:R1=0}).
\end{proof}


On the basis of results of the previous subsections we derive the following characterization of the Maxwell strata.

\begin{theorem}
\label{th:Max_compl}
\begin{itemize}
\item[$(1)$]
$\MAX^1 \cap N_1  =  \MAX^1 \cap N_2 =   \MAX^1 \cap N_3 = \emptyset.$ 
\item[$(2)$]
$\MAX^2 \cap N_1  =  \MAX^2 \cap N_3  = \emptyset$,

$\MAX^2 \cap N_2 = \{ \nu \in N_2 \mid p = p_1^n(k), \ts \neq 0 \}$.
\item[$(3)$]
$\MAX^5 \cap N_1  =  \{ \nu \in N_1 \mid p = K + 2 Kn \}$,

$\MAX^5 \cap N_2 = \MAX^5 \cap N_3  = \emptyset $.
\item[$(4)$]
$\MAX^6 \cap N_1  =  \{ \nu \in N_1 \mid p = 2 Kn \}$,

$\MAX^6 \cap N_2  =  \{ \nu \in N_2 \mid p = 2 Kn, \ \tc \neq 0 \}$,

$\MAX^6 \cap N_3  = \emptyset $.
\end{itemize}
\end{theorem}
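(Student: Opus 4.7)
The approach is purely combinatorial: combine Theorem~\ref{th:Max_gen} with the explicit descriptions of the zero sets of $\theta_t$, $\theta_t-\pi$, $R_1(q_t)$, $R_2(q_t)$ furnished by Propositions~\ref{propos:thetat=0}, \ref{propos:thetat=pi}, \ref{propos:R1=0}, \ref{propos:R2=0}, and use Lemma~\ref{lem:000} to discard the degenerate alternative $(x_t,y_t,\theta_t) = (0,0,0)$ in item~(3). No new analytic input is needed; each item reduces to a boolean manipulation of statements already proved.

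For item~(1), Theorem~\ref{th:Max_gen}(1) demands $R_1(q_t) = 0$, which by Propos.~\ref{propos:R1=0} forces $\tc = 0$ in $C_1$ and is outright impossible in $C_2 \cup C_3$; combined with the accompanying clause $\tc \neq 0$ in the $C_1$ case, each alternative yields emptiness. For item~(2), one intersects $R_2(q_t) = 0$ with $\ts \neq 0$: Propos.~\ref{propos:R2=0}(1) rules out $C_1$ at once, Propos.~\ref{propos:R2=0}(2) selects the root family $p = p_1^n(k)$ in $C_2$, and in $C_3$ the equation $R_2(q_t) = 0$ reduces to $\tau = 0$, which coincides with the fixed-point locus $\lam = \lam^2$ on $C_3$ by Propos.~\ref{propos:lami=lam}(2) and is therefore excluded by definition~\eq{MAX_def} of the Maxwell stratum.

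For item~(3), Lemma~\ref{lem:000} eliminates the origin and Propos.~\ref{propos:lami=lam}(3) renders the inequality $\lam \neq \lam^5$ automatic, so the only surviving requirement is $\theta_t = \pi$; Propos.~\ref{propos:thetat=pi} then gives $p = K + 2Kn$ in $C_1$ and emptiness in $C_2 \cup C_3$. For item~(4), Propos.~\ref{propos:thetat=0} lists the solutions of $\theta_t = 0$ as $p = 2Kn$ in $C_1$, as ``$p = 2Kn$ or $\tc = 0$'' in $C_2$, and as empty in $C_3$; intersecting with the extra clause $\tc \neq 0$ on $C_2$ from Theorem~\ref{th:Max_gen}(4) produces the stated description.

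There is essentially no real obstacle---the proof is pure bookkeeping. The one place that deserves a moment of attention is item~(2) on $C_3$, where an apparent solution $\tau = 0$ of $R_2(q_t) = 0$ must be recognized as a fixed point of $\eps^2$ and therefore discarded from $\MAX^2$ by~\eq{MAX_def}.
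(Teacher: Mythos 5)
Your proposal is correct and follows exactly the paper's own route: the paper's proof is the one-line instruction to combine Theorem~\ref{th:Max_gen} with Propositions~\ref{propos:thetat=0}--\ref{propos:R2=0} and Lemma~\ref{lem:000}, which is precisely the bookkeeping you carry out (including the correct handling of the $\tau=0$ fixed-point locus of $\eps^2$ on $C_3$). No discrepancies to report.
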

\begin{proof}
Apply Th.~\ref{th:Max_gen}, Propositions~\ref{propos:thetat=0}--%
\ref{propos:R2=0}, and Lemma~\ref{lem:000}.
\end{proof}

\subsection{Upper bound on cut time}
The cut time for an extremal trajectory $q_s$ is defined as follows:
$$
\tcut = \sup \{ t_1 > 0 \mid q_s \text{ is optimal for } s \in [0,t_1]\}.
$$
For normal extremal trajectories $q_s = \Exp(\lam,s)$, the cut time is a function of the initial covector:
$$
\map{\tcut}{C}{[0, + \infty]}.
$$
Denote the first Maxwell time as
$$
\tmax(\lam) = \inf \{ t > 0 \mid (\lam,t) \in \MAX \}.
$$
A normal extremal trajectory cannot be optimal after a Maxwell point, thus
$$
\tcut(\lam) \leq \tmax(\lam) \qquad \forall \ \lam \in C.
$$
On the basis of this inequality and results of Subsec.~\ref{subsec:Max_compl}, we derive an effective upper bound on cut time in the sub-Riemannian problem on $\SE(2)$. To this end
define the following function $\map{\tt}{C}{(0, + \infty]}$:
\begin{align}
&\lam \in C_1 \then \tt(\lam) = 2 K(k), \label{ttC1} \\
&\lam \in C_2 \then \tt(\lam) = 2 k p_1^1(k), \label{ttC2}\\
&\lam \in C_3 \then \tt(\lam) = +\infty, \label{ttC3}\\
&\lam \in C_4 \then \tt(\lam) = \pi, \label{ttC4}\\
&\lam \in C_5 \then \tt(\lam) = +\infty. \label{ttC5}
\end{align}

\begin{theorem}
\label{th:tcut_bound}
Let $\lam \in C$. We have
\be{tcuttt}
\tcut (\lam) \leq \tt(\lam)
\ee
in the following cases:
\begin{itemize}
\item[$(1)$]
$\lam \in C \setminus C_2$, 
\item[$(2)$]
$\lam \in  C_2$ and $\ts \neq 0$.
\end{itemize}
\end{theorem}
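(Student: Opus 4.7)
The plan is to combine the universal inequality $\tcut(\lam)\le\tmax(\lam)$ with the explicit description of Maxwell strata in Theorem~\ref{th:Max_compl}. For each $\lam$ covered by the hypothesis, it suffices to exhibit a reflection index $i$ and a time $t\le\tt(\lam)$ for which $(\lam,t)\in\MAX^i$; this forces $\tmax(\lam)\le\tt(\lam)$, hence the claim.

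I proceed case by case. For $\lam\in C_1$, Theorem~\ref{th:Max_compl}(3) shows that $\nu=(\lam,t)$ lies in $\MAX^5\cap N_1$ exactly when $p=t/2=K(k)+2K(k)n$; the choice $n=0$ gives $t=2K(k)=\tt(\lam)$ by~\eqref{ttC1}. For $\lam\in C_2$ with $\ts\neq 0$, Theorem~\ref{th:Max_compl}(2) together with the relation $p=t/(2k)$ yields $(\lam,\,2k\,p_1^1(k))\in\MAX^2\cap N_2$, and this time coincides with $\tt(\lam)$ from~\eqref{ttC2}. For $\lam\in C_3$ and $\lam\in C_5$ the bound is vacuous since $\tt(\lam)=+\infty$ by~\eqref{ttC3} and~\eqref{ttC5}.

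The remaining case $\lam\in C_4$ falls outside the scope of Theorem~\ref{th:Max_compl} and requires a direct verification from the parametrization of Subsection~\ref{subsec:param_extr}. The trajectory is confined to the circle $\{x=y=0\}$ and rotates as $\theta_t=-s_1 t$. By Proposition~\ref{propos:qit}(4), $q_t^4=q_t$ iff $\sin\theta_t=x_t=0$, i.e., $t=\pi n$; taking $n=1$ gives $t=\pi$. Proposition~\ref{propos:lami}(4) shows that $\eps^4$ exchanges $C_4^0$ and $C_4^1$, so $\lam^4\neq\lam$, whence $(\lam,\pi)\in\MAX^4$ and $\tmax(\lam)\le\pi=\tt(\lam)$ by~\eqref{ttC4}.

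In each case one should also verify that the exhibited Maxwell time is not beaten by a smaller candidate coming from another stratum in Theorem~\ref{th:Max_compl}. The only comparison that is not immediate is in case $C_2$: one must check $2k\,p_1^1(k)<4k\,K(k)$, which follows from the bound $p_1^1(k)<2K(k)$ of Corollary~\ref{cor:p11}. The principal conceptual obstacle is understanding the role of the hypothesis $\ts\neq 0$: when $\ts=0$ the covector $\lam$ is fixed by $\eps^2$ (Proposition~\ref{propos:lami=lam}(2)), so the reflection $\eps^2$ does not yield a distinct second extremal and one cannot use $\MAX^2$ to bound the cut time. These excluded covectors are deferred to a separate analysis, which is why the theorem splits off this case explicitly.
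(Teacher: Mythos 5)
Your proof is correct and follows essentially the same route as the paper: combine $\tcut(\lam)\le\tmax(\lam)$ with Theorem~\ref{th:Max_compl} by exhibiting, for each admissible $\lam$, a point of a Maxwell stratum at time $t=\tt(\lam)$, with a direct two-trajectory argument for $C_4$ and the trivial observation that the bound is vacuous on $C_3\cup C_5$. One detail is worth flagging: for $\lam\in C_1$ you use $\MAX^5$ at $p=K(k)$, i.e.\ $t=2K(k)=\tt(\lam)$, whereas the paper's printed proof invokes $\MAX^6$ at $t=4K(k)$, which only yields $\tcut\le 4K(k)$ and therefore does not match the definition \eq{ttC1}; your choice is the one that actually delivers the stated bound, so you have in effect corrected a slip in the paper's argument. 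Your closing remark about checking that the exhibited Maxwell time is not undercut by one from another stratum is superfluous for an upper bound (a smaller Maxwell time would only strengthen the estimate), but it is harmless, and your explanation of why the hypothesis $\ts\neq 0$ is needed matches the paper's treatment (the gap being closed later via conjugate points in Proposition~\ref{propos:CMAXN2}).
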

\begin{proof}
If $\lam \in C_1$, then $(\lam, 4 K(k)) = (\lam, \tt(\lam))\in \MAX^6$ by item (4) of Th.~\ref{th:Max_compl}, thus
\be{tcutlamless}
\tcut(\lam) \leq \tmax(\lam) \leq  \tt(\lam).
\ee

If $(\lam,t) \in N_2$ and  $p= p_1^1(k)$, $\ts \neq 0$,  then $(\lam,t) \in \MAX^2$ by item (2) of  Th.~\ref{th:Max_compl}, and the chain~\eq{tcutlamless} follows.

If $\lam \in C_4$, then the trajectories $\Exp(\lam,t) = (0,0,-s_1 t)$ and $\Exp(\lam^4,t) = (0,0,s_1 t)$ intersect one another at the instant $t = \pi$, thus $(\lam,\pi) = (\lam,  \tt(\lam)) \in \MAX^4$, and the chain~\eq{tcutlamless} follows as well.
\end{proof}

\subsection{Limit points of Maxwell set}
Here we fill the gap appearing in item~(2) of Th.~\ref{th:tcut_bound} via the theory of conjugate points.

A normal extremal trajectory (geodesic)
$q_t$  is called \ddef{strictly normal} if it is a projection of a normal extremal~$\lam_t$,  but is not a projection of an abnormal extremal. In the sub-Riemannian problem on $\SE(2)$ all geodesics   are strictly normal.  

A point 
$q_t$ of a strictly normal geodesic 
$q_s = \Exp(\lam, s)$, $s \in [0, t]$, is called \ddef{conjugate} to the point 
$q_0$ along the geodesic
$q_s$ if  $\nu = (\lam, t)$ is a critical point of the exponential mapping.

It is known that a strictly normal geodesic cannot be optimal after a conjugate point~\cite{notes}. At the first conjugate point a geodesic loses its local optimality. Below we find conjugate points on geodesics with 
$\lam \in C_2$ not containing Maxwell points. These conjugate points are limits of pairs of the corresponding Maxwell points, the corresponding theory was developed in~\cite{max3}.

\begin{proposition}[Propos. 5.1~\cite{max3}]
\label{propos:prop5.1max3}
Let
$\nu_n, \ \nu'_n \in N$, $\nu_n \neq \nu'_n$, $\Exp(\nu_n) = \Exp(\nu'_n)$, $n \in \N$. If the both  sequences
$\{\nu_n\}$, $\{\nu'_n\}$ converge to a point 
$\bnu = (\lam, t)$, and the geodesic 
$q_s = \Exp(\lam, s)$ is strictly normal, then its endpoint 
$q_t= \Exp(\bnu) $ is a conjugate point.
\end{proposition}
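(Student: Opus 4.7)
The plan is to argue by contradiction, essentially as a one-line application of the inverse function theorem. Suppose, for the sake of contradiction, that the endpoint $q_t = \Exp(\bnu)$ is \emph{not} a conjugate point along $q_s$. Since by hypothesis $q_s$ is strictly normal, the definition of conjugate point recalled just before the proposition applies verbatim, and the negation means precisely that $\bnu$ is not a critical point of the exponential mapping $\Exp \, : \, N \to M$. Equivalently, the differential $d\,\Exp|_{\bnu} \, : \, T_{\bnu} N \to T_{q_t} M$ has full rank. Because $\dim N = \dim M = 3$, this differential is then an isomorphism.

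By the inverse function theorem there exists an open neighborhood $U \subset N$ of $\bnu$ such that $\restr{\Exp}{U}$ is a diffeomorphism onto its image; in particular, $\restr{\Exp}{U}$ is injective. Since $\nu_n \to \bnu$ and $\nu_n' \to \bnu$, for all sufficiently large $n$ both $\nu_n$ and $\nu_n'$ lie in $U$. Combined with the hypothesis $\Exp(\nu_n) = \Exp(\nu_n')$, injectivity forces $\nu_n = \nu_n'$ for all such $n$, contradicting the assumption $\nu_n \neq \nu_n'$. Hence $\bnu$ must be a critical point of $\Exp$, so by definition $q_t$ is conjugate to $q_0$ along $q_s$, which is the desired conclusion.

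The only structural ingredient beyond the inverse function theorem is the strict normality hypothesis, which is what licenses identifying ``conjugate point'' with ``critical value of the normal exponential mapping'' (and, as noted in the paper, it holds automatically for every geodesic in the sub-Riemannian problem on $\SE(2)$, since the distribution is contact and there are no nontrivial abnormal extremals). There is really no serious obstacle here: the matching of dimensions $\dim N = \dim M = 3$ and the continuity of $\Exp$ make the inverse-function-theorem argument go through immediately. If one wanted to emphasize robustness of the statement, the only mild subtlety to check is that the neighborhood $U$ produced by the inverse function theorem does capture both sequences simultaneously, which is clear from $\nu_n, \nu_n' \to \bnu$.
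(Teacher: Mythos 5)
Your argument is correct: since the paper defines a conjugate point (on a strictly normal geodesic) precisely as a critical point of $\Exp$, and $\dim N=\dim M=3$, the contrapositive via the inverse function theorem --- local injectivity of $\Exp$ near a regular point forces $\nu_n=\nu'_n$ for large $n$ --- is exactly the standard proof of this statement. The paper itself gives no proof, importing the result from Proposition~5.1 of~\cite{max3}, where the argument is the same inverse-function-theorem reasoning you present, so your proposal matches the intended proof.
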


It is convenient to introduce the following set, which we call the \ddef{double closure of Maxwell set}:
\begin{multline*}
\CMAX = \left\{
\bnu \in N \mid 
\exists \ \{\nu_n = (\lam_n, t_n)\}, \ 
\{\nu'_n = (\lam'_n, t_n)\} \subset N \ : 
\vphantom{\Exp(\nu'_n) \lim\limits_{n \to \infty}}\right.
\\ 
\left.
\nu_n \neq \nu'_n, 
\Exp(\nu_n) = \Exp(\nu'_n), \ n \in \N, \ 
\lim\limits_{n \to \infty} \nu_n = \lim\limits_{n \to \infty} \nu'_n = \bnu 
\right\}.
\end{multline*}
It is obvious that 
$\nu_n \in \MAX$, thus 
$\CMAX \subset \cl(\MAX)$.

Proposition~\ref{propos:prop5.1max3} claims that if 
$\nu = (\lam,t) \in \CMAX$ and the geodesic 
$q_s = \Exp(\lam, s)$ is strictly normal, then its endpoint 
$q_t$ is a conjugate point.

\begin{proposition}
\label{propos:CMAXN2}
Let $\nu = (\lam,t) \in N_2$ be such that $p = p_1^1(k)$, $\ts = 0$. Then the point $q_t = \Exp(\nu)$ is conjugate, thus $t \geq \tcut(\lam)$.
\end{proposition}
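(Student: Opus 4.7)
The plan is to show that $\bnu = (\lam, t)$ belongs to the double closure of the Maxwell set $\CMAX$ and then invoke Proposition~\ref{propos:prop5.1max3}. The key observation is that the hypothesis $\ts = 0$ combined with Proposition~\ref{propos:lami=lam}(2) gives $\lam^2 = \lam$, hence $\eps^2(\bnu) = \bnu$; thus $\bnu$ is a fixed point of the continuous involution $\eps^2 : N \to N$. By Theorem~\ref{th:Max_compl}(2), any point $\nu = (\tilde\lam, \tilde t) \in N_2$ with $p = p_1^1(k)$ and $\ts \neq 0$ lies in $\MAX^2$, and for each such $\nu$ the pair $(\nu, \eps^2(\nu))$ consists of distinct preimages of a single point under $\Exp$. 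It therefore suffices to produce a sequence $\{\nu_n\} \subset \MAX^2 \cap N_2$ with $\nu_n \to \bnu$: setting $\nu'_n := \eps^2(\nu_n)$, continuity of $\eps^2$ together with $\eps^2(\bnu) = \bnu$ gives $\nu'_n \to \bnu$ as well, so $\bnu \in \CMAX$.

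The sequence is obtained by a transverse perturbation in the elliptic coordinates $(\tau, p, k)$ on $N_2$ introduced just before Proposition~\ref{propos:lami=lam}. Write $\bnu = (\bar\tau, \bar p, \bar k)$ with $\bar p = p_1^1(\bar k)$ and $\bar\tau \in 2 K(\bar k)\, \Z$ (this is precisely the condition $\sn \bar\tau = 0$), and set $\nu_n := (\bar\tau + 1/n, \bar p, \bar k)$. For all sufficiently large $n$ one has $\sn(\bar\tau + 1/n) \neq 0$, while $p_n = \bar p = p_1^1(\bar k) = p_1^1(k_n)$ since $k_n = \bar k$. Theorem~\ref{th:Max_compl}(2) then gives $\nu_n \in \MAX^2 \cap N_2$, and evidently $\nu_n \to \bnu$.

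The conclusion now follows: since the contact hypothesis~\eq{xi3},~\eq{spanxi123} forces every sub-Riemannian geodesic on $\SE(2)$ to be strictly normal, Proposition~\ref{propos:prop5.1max3} applied to $\bnu \in \CMAX$ identifies $q_t = \Exp(\bnu)$ as a point conjugate to $q_0$ along the geodesic $s \mapsto \Exp(\lam, s)$; since a strictly normal geodesic cannot be optimal past a conjugate point, $t \geq \tcut(\lam)$.

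The only non-routine step is the construction in the second paragraph. Its essence is that $\eps^2$ fixes $\bnu$, which is exactly what allows both legs of an approximating Maxwell pair to collapse onto the same limit point, as required by the definition of $\CMAX$. Once this geometric picture is in place, the result is a direct application of the abstract principle of Proposition~\ref{propos:prop5.1max3}, which was designed precisely to convert such limiting Maxwell configurations into conjugate points.
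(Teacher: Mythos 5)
Your proof is correct and follows essentially the same route as the paper: both exhibit $\bnu$ as a limit of pairs of distinct $\Exp$-preimages obtained by perturbing $\tau$ off the zero of $\sn\tau$ while keeping $p = p_1^1(k)$ fixed, and then invoke Proposition~\ref{propos:prop5.1max3}. The only difference is presentational: the paper writes the two sequences explicitly as $\tau \pm 1/n$ and checks $\Exp(\nu_n^-)=\Exp(\nu_n^+)$ directly from formulas~\eq{sinthC2}--\eq{R2C2}, whereas you generate the partner sequence as $\eps^2(\nu_n)$ using the fixed-point property $\eps^2(\bnu)=\bnu$ and quote Theorem~\ref{th:Max_compl}(2) --- which yields the same pair, since $\eps^2$ acts as $\tau \mapsto -\tau$ about the zeros of $\sn\tau$.
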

\begin{proof}
Consider the points $\nu_n^{\pm} = (p_1^1(k), \tau \pm 1/n, k) \in N_2$. Then $\nu_n^+ \neq \nu_n^-$ and $\lim_{n \to \infty} \nu^{\pm} = \nu$. Formulas~\eq{sinthC2}--\eq{R2C2} imply that $\Exp(\nu_n^-) = \Exp(\nu_n^+)$. Thus $\nu \in \CMAX$, and the statement follows from Propos.~\ref{propos:prop5.1max3}.
\end{proof}

\subsection{The final bound of the cut time}

\begin{theorem}
\label{th:tcut_bound_fin}
There holds the bound 
\be{tcut_bound_fin}
\tcut(\lam) \leq \tt(\lam) \qquad \forall \lam \in C.
\ee
\end{theorem}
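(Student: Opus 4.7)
The plan is almost entirely a bookkeeping exercise that assembles Theorem~\ref{th:tcut_bound} and Proposition~\ref{propos:CMAXN2}. Theorem~\ref{th:tcut_bound} already establishes \eq{tcut_bound_fin} in all cases except one: $\lam \in C_2$ with $\ts = 0$ at time $t = \tt(\lam) = 2k\, p_1^1(k)$. The only task is to rule out this single gap, and the conjugate-point machinery of the previous subsection has been assembled precisely for that purpose.

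First I would fix $\lam \in C$ and split into cases. If $\lam \in C \setminus C_2$, apply item (1) of Theorem~\ref{th:tcut_bound} directly. If $\lam \in C_2$ and $\ts \neq 0$ at $\tau$ corresponding to $t = \tt(\lam)$, apply item (2) of Theorem~\ref{th:tcut_bound}. The only remaining situation is $\lam \in C_2$ with $\ts = 0$ at that value of $\tau$.

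For this remaining borderline case, set $t = \tt(\lam) = 2k\, p_1^1(k)$, so that in the standard coordinates on $N_2$ one has $p = t/(2k) = p_1^1(k)$ and, by assumption, $\ts = 0$. Proposition~\ref{propos:CMAXN2} applies verbatim and yields $\tcut(\lam) \leq t = \tt(\lam)$. Concatenating the three cases gives \eq{tcut_bound_fin} for every $\lam \in C$.

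There is really no hard step here, only one mild subtlety worth flagging: on the locus $\{\ts = 0,\ p = p_1^1(k)\}$ the Maxwell pair constructed from the reflection $\eps^2$ degenerates (the two symmetric extremals coincide), so the naive Maxwell argument does not conclude. The resolution is exactly the content of Proposition~\ref{propos:CMAXN2}: one approximates the degenerate point by a sequence of honest Maxwell pairs $(\nu_n^+,\nu_n^-)$ with $\tau_n = \tau \pm 1/n$, invokes Proposition~\ref{propos:prop5.1max3} from~\cite{max3}, uses strict normality of all $\SE(2)$ sub-Riemannian geodesics, and concludes that $q_t$ is a conjugate point. Since a strictly normal geodesic loses optimality at the first conjugate point, one obtains the desired inequality, completing the proof.
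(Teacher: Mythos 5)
Your proof is correct and follows exactly the paper's own argument: the paper's proof of Theorem~\ref{th:tcut_bound_fin} is precisely the combination of Theorem~\ref{th:tcut_bound} for the cases already covered and Proposition~\ref{propos:CMAXN2} for the remaining locus $\lam \in C_2$, $p = p_1^1(k)$, $\ts = 0$, where the degenerate Maxwell pair is replaced by a limit of genuine Maxwell pairs yielding a conjugate point. Your additional remarks on why the naive Maxwell argument degenerates there match the paper's motivation for introducing $\CMAX$.
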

\begin{proof}
Apply Th.~\ref{th:tcut_bound} and Propos.~\ref{propos:CMAXN2}.
\end{proof}

The function $\tt(\lam)$ deserves to be studied in some detail. One can see from its definition~\eq{ttC1}--\eq{ttC5} that  the function $\tt$ depends only on the elliptic coordinate $k$, i.e., only on the energy $E$~\eq{E} of pendulum~\eq{ham_vert}, but not on its phase $\f$. Thus we have a function
$$
\mapto{\tt}{E}{\tt(E)}, \qquad \map{\tt}{[-1,+\infty)}{(0, + \infty]}.
$$

\begin{proposition}
\label{propos:ttE}
\begin{itemize}
\item[$(1)$]
The function $\tt(E)$ is smooth for $E \in [-1, 1) \cup (1, + \infty)$.
\item[$(2)$]
$\lim\limits_{E \to -1+0} \tt(E) = \pi$;    $\lim\limits_{E \to 1} \tt(E) = + \infty$;   $\tt \sim 2 \sqrt 2 \pi /\sqrt{E+1} \to 0 $ as $E \to + \infty$.
\end{itemize}
\end{proposition}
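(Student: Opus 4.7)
The plan is to view $\tt(E)$ piecewise through the bijection between energy levels and the modulus $k$ introduced in Subsection~\ref{subsec:ell_coordsC}, and reduce every claim to the known analytic behaviour of $K(k)$ and of the first positive root $p_1^1(k)$ of $f_1(p,k)$ obtained in Lemma~\ref{lem:f1=0} and Corollary~\ref{cor:p11}. The range of $E$ splits as $\{-1\}\cup(-1,1)\cup\{1\}\cup(1,+\infty)$, corresponding respectively to $C_4$, $C_1$, $C_3\cup C_5$, $C_2$; on the middle two open intervals $\tt(E)$ is a smooth function composed with a smooth change of variable $E\mapsto k$.

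For smoothness on $[-1,1)$ I would use the relation $k^2=(E+1)/2$. On $(-1,1)$ the covector lies in $C_1$, so $\tt(E)=2K(k)$. Since $K$ is analytic in $k^2$ near $k=0$ (the classical hypergeometric expansion), the composition is smooth in $E$ on $[-1,1)$; moreover $2K(0)=\pi$ agrees with the value $\tt=\pi$ prescribed by the $C_4$ case, so the two pieces of the definition \eqref{ttC1}, \eqref{ttC4} match smoothly at $E=-1$. On $(1,+\infty)$ the covector lies in $C_2$ and $\tt(E)=2k\,p_1^1(k)$ with $k=\sqrt{2/(E+1)}\in(0,1)$; here $k$ depends analytically on $E$, and by Lemma~\ref{lem:f1=0} the function $p_1^1(k)$ is smooth on $(0,1)$, so $\tt(E)$ is smooth on $(1,+\infty)$.

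For the limit assertions I would pass to the limit in the two formulas, using the standard facts $K(0)=\pi/2$, $K(k)\to+\infty$ as $k\to1^-$, together with Corollary~\ref{cor:p11}(3), which gives $p_1^1(k)\to\pi$ as $k\to0^+$ and $p_1^1(k)\to+\infty$ as $k\to1^-$. This yields immediately $\tt(E)\to\pi$ as $E\to-1^+$; $\tt(E)\to+\infty$ from either side as $E\to1$; and for $E\to+\infty$, $k\to0^+$ so $\tt(E)=2k\,p_1^1(k)\sim 2\pi k=2\pi\sqrt{2/(E+1)}=2\sqrt{2}\,\pi/\sqrt{E+1}\to 0$.

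The only nontrivial point is smoothness at the boundary value $E=-1$, i.e.\ at $k=0$: although $k=\sqrt{(E+1)/2}$ itself is not smooth in $E$ there, $K(k)$ is smooth in $k^2$, and $k^2=(E+1)/2$ is smooth in $E$, so the composition is smooth. All remaining steps are direct substitutions in the limit statements cited above.
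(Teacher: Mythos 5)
Your proof is correct and takes essentially the same route as the paper: reduce each piece of $\tt$ to $2K(k)$ or $2k\,p_1^1(k)$ via the change of variable $E\mapsto k$, then invoke the smoothness of $K$ and $p_1^1$ on $[0,1)$ and the limits $K(0)=\pi/2$, $K(k)\to+\infty$, $p_1^1(k)\to+\infty$ as $k\to 1^-$, and $p_1^1(k)\to\pi$ as $k\to 0^+$. Two small points in your favour: your treatment of smoothness at the endpoint $E=-1$ (via analyticity of $K$ in $k^2$, since $k=\sqrt{(E+1)/2}$ itself is not smooth there) is more careful than the paper's one-line justification, and your value $\lim_{k\to 0^+}p_1^1(k)=\pi$ is the one consistent with Corollary~\ref{cor:p11} and with the stated asymptotic $2\sqrt 2\,\pi/\sqrt{E+1}$, whereas the paper's proof writes $2\pi$ for this limit (evidently a slip propagated from \eqref{p1n0}).
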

\begin{proof}
(1) follows from smoothness of the functions $K(k)$ and $p_1^1(k)$ for $k \in [0,1)$. 

(2) follows from the limits
$\lim\limits_{k \to +0} K(k) = \pi/2$, $\lim\limits_{k \to 1-0} K(k) = \lim\limits_{k \to 1-0} p_1^1(k) =+ \infty$, $\lim\limits_{k \to 0} p_1^1(k) = 2 \pi$.
\end{proof}

A plot of the function $\tt(E)$ is given at Fig.~\ref{fig:tt}.

\onefiglabel{ttE}{Plot of the function  $E \mapsto \tt(E)$}{fig:tt}

In our forthcoming work~\cite{cut_sre} we show that the inequality ~\eq{tcut_bound_fin} is in fact an equality, i.e., $\tcut(\lam) = \tt(\lam)$ for $\lam \in C$.

\newpage
\addcontentsline{toc}{section}{\listfigurename}
\listoffigures

\newpage
\addcontentsline{toc}{section}{\refname}

\end{document}